\theoremstyle{plain}
\newtheorem{theorem}{Theorem}[section]
\newtheorem{proposition}[theorem]{Proposition}
\newtheorem{lemma}[theorem]{Lemma}
\newtheorem{corollary}[theorem]{Corollary}
\theoremstyle{definition}
\theoremstyle{remark}
\newtheorem{remark}[theorem]{Remark}
\def\R{\mbox{$\mathbb{R}$}}
\newcommand{\frc}[1]{\operatorname{frac} \left ( #1 \right )}
\newcommand{\Z}{\mathbb{Z}}
\begin{document}
\title[On the clique number of a strongly regular graph]{On the clique number of a \\strongly regular graph}

\author[G. R. W. Greaves]{ Gary R. W. Greaves }
\author[L. H. Soicher]{ Leonard H. Soicher }
\thanks{The first author was supported by JSPS KAKENHI; 
grant number: 26$\cdot$03903}


\address{Research Center for Pure and Applied Mathematics,
Graduate School of Information Sciences, 
Tohoku University, Sendai 980-8579, Japan}
\email{grwgrvs@gmail.com}

\address{School of Mathematical Sciences, Queen Mary University of London, Mile End Road, London E1 4NS, UK}
\email{L.H.Soicher@qmul.ac.uk}

\subjclass[2010]{}

\keywords{Delsarte bound, Hoffman bound, block intersection polynomial, clique number, conference graphs, strongly regular graphs, clique adjacency bound.}

\begin{abstract}
	We determine new upper bounds for the clique numbers of strongly regular graphs in terms of their parameters.
	These bounds improve on the Delsarte bound for infinitely many feasible parameter tuples for strongly regular graphs, including infinitely many parameter tuples that correspond to Paley graphs.
\end{abstract}

\maketitle

\section{Introduction}

The \textbf{clique number} $\omega(\Gamma)$ of a graph $\Gamma$ is defined to be the cardinality of a clique of maximum size in $\Gamma$.
For a $k$-regular strongly regular graph with smallest eigenvalue $s < 0$, Delsarte~\cite[Section 3.3.2]{Del:73} proved that $\omega(\Gamma) \leqslant \lfloor 1 -k/s \rfloor$; we refer to this bound as the \textbf{Delsarte bound}.
Therefore, since one can write $s$ in terms of the parameters of $\Gamma$, one can determine the Delsarte bound knowing only the parameters $(v,k,\lambda,\mu)$ of $\Gamma$.
In this paper we determine new upper bounds for the clique numbers of strongly regular graphs in terms of their parameters.
Our bounds improve on the Delsarte bound infinitely often.

Let $q = p^k$ be a power of a prime $p$ congruent to $1$ mod $4$.
A \textbf{Paley graph} has vertex set equal to the finite field $\mathbb F_q$, and two vertices $a$ and $b$ are adjacent if and only if $a-b$ is a nonzero square.
For a Paley graph $\Gamma$ on $q$ vertices with $k$ even, Blokhuis~\cite{Blok84} showed that $\omega(\Gamma) = \sqrt{q}$; this corresponds to equality in the Delsarte bound.
Bachoc et al.~\cite{Bach:Paley13} recently considered the case when $\Gamma$ is a Paley graph on $q$ vertices with $k$ odd and, for certain such $q$, showed that $\omega(\Gamma) \leqslant \lfloor \sqrt{q} - 1 \rfloor$.
This corresponds to an improvement to the Delsarte bound for these Paley graphs.

Here, working much more generally, given a strongly regular graph $\Gamma$ with parameters $(v,k,\lambda,\mu)$, we provide inequalities in terms of the parameters of $\Gamma$ that, when satisfied, guarantee that the clique number of $\Gamma$ is strictly less than the Delsarte bound.
We show that these inequalities are satisfied by infinitely many feasible parameters tuples for strongly regular graphs and, in particular, are satisfied by infinitely many parameter tuples that correspond to Paley graphs.
Our inequalities are obtained using what we call the ``clique adjacency bound'' (see Section~\ref{sec:evaluating_the_clique_adjacency_polynomial}), a bound defined by the second author~\cite{Soi:CAB15}.
We also show that the clique adjacency bound is always at most the Delsarte bound when applied to strongly regular graphs.

The paper is organised as follows.
In Section~\ref{sec:mainresults} we state our main results and in Section~\ref{sec:parameters_of_strongly_regular_graphs} we state some standard identities that we will use in our proofs.
Section~\ref{sec:evaluating_the_clique_adjacency_polynomial} contains the proofs of our main results. 
In Section~\ref{sec:a_limitation_of_the_bound} we examine the strength of the clique adjacency bound and in Section~\ref{sec:HoffmanDelsarteCAB} we provide an illustrative example comparing certain bounds for the clique number of an edge-regular graph that is not necessarily strongly regular.
Finally, we give an appendix in which we describe our symbolic computations.

\section{Definitions and main results}
\label{sec:mainresults}
A non-empty $k$-regular graph on $v$ vertices is called \textbf{edge-regular} if there exists a constant $\lambda$ such that every pair of adjacent vertices has precisely $\lambda$ common neighbours.
The triple $(v,k,\lambda)$ is called the \textbf{parameter tuple} of such a graph.
A \textbf{strongly regular graph} $\Gamma$ with \textbf{parameter tuple} $(v,k,\lambda,\mu)$ is defined to be a non-complete  edge-regular graph with parameter tuple $(v,k,\lambda)$ such that every pair of non-adjacent vertices has precisely $\mu$ common neighbours.
We refer to the elements of the parameter tuple as the \textbf{parameters} of $\Gamma$.
We call the parameter tuple of a strongly regular graph \textbf{feasible} if its elements satisfy certain nonnegativity and divisibility constraints given by Brouwer~\cite[VII.11.5]{CRCHAndbook2007}).
%

Let $\Gamma$ be a strongly regular graph with parameters $(v,k,\lambda,\mu)$.
It is well-known that $\Gamma$ has at most three distinct eigenvalues, and moreover, the eigenvalues can be written in terms of the parameters of $\Gamma$ (see \cite[Section 10.2]{God01}).
In what follows we denote the eigenvalues of $\Gamma$ as $k > r \geqslant s$.

Strongly regular graphs whose parameters satisfy $k = (v-1)/2$, $\lambda = (v-5)/4$, and $\mu = (v-1)/4$ are called \textbf{type I} or \textbf{conference graphs}.
Strongly regular graphs all of whose eigenvalues are integers are called \textbf{type II}.
Every strongly regular graph is either type I, type II, or both type I and type II (see Cameron and Van Lint~\cite[Chapter 2]{CvL}).

The \textbf{fractional part} of a real number $a \in \R$ is defined as $\frc{a} := a - \lfloor a \rfloor$.
We are now ready to state our main results.

\begin{theorem}\label{thm:beatDelsarteConf}
	Let $\Gamma$ be a type-I strongly regular graph with $v$ vertices.
	Suppose that 
	\[
		0 < \frc{\sqrt{v}/2} < 1/4 + ( \sqrt{v}-\sqrt{v+5/4} )/2.
	\]
	Then $\omega(\Gamma) \leqslant \lfloor \sqrt{v} - 1 \rfloor$.
\end{theorem}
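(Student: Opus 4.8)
The plan is to translate the clique adjacency bound into an explicit arithmetic statement for conference graphs and then show the hypothesis forces that bound below the Delsarte value. A type-I strongly regular graph has parameters $(v,k,\lambda,\mu)=(v,(v-1)/2,(v-5)/4,(v-1)/4)$, so its nontrivial eigenvalues are $r,s = (-1\pm\sqrt v)/2$; in particular $s = (-1-\sqrt v)/2$. First I would compute the Delsarte bound $\lfloor 1 - k/s\rfloor$ in terms of $v$ alone. Since $k/s = \tfrac{(v-1)/2}{(-1-\sqrt v)/2} = -\tfrac{v-1}{1+\sqrt v} = -(\sqrt v - 1) = 1-\sqrt v$, we get $1 - k/s = \sqrt v$, so the Delsarte bound is $\lfloor\sqrt v\rfloor$. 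The target $\lfloor\sqrt v - 1\rfloor$ is therefore strictly smaller than the Delsarte bound exactly when $\sqrt v$ is not an integer, which the hypothesis $0 < \{\sqrt v/2\}$ will guarantee. So the content of the theorem is to rule out a clique of size $\lfloor\sqrt v\rfloor$.

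Next I would invoke the clique adjacency bound from Section~\ref{sec:evaluating_the_clique_adjacency_polynomial}, which gives an upper bound $\omega(\Gamma)\le m$ whenever the associated clique adjacency polynomial (a block intersection polynomial depending only on the parameters $(v,k,\lambda,\mu)$ and a candidate clique size $m+1$) takes a negative value for some admissible integer argument. Concretely I expect the theorem to follow by plugging the candidate clique size $c=\lfloor\sqrt v\rfloor$ into the clique adjacency polynomial and exhibiting an integer $x$ at which the polynomial is negative; this certifies that no clique of size $c$ exists, hence $\omega(\Gamma)\le c-1 = \lfloor\sqrt v\rfloor-1 = \lfloor\sqrt v-1\rfloor$ (the last equality again using that $\sqrt v$ is non-integral under the hypothesis). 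The main computation is to substitute the type-I parameters into the clique adjacency polynomial, obtaining a quadratic (or low-degree) expression in $x$ with coefficients that are explicit functions of $v$ and the floor $\lfloor\sqrt v\rfloor$, and then to locate a good integer $x$—most naturally $x$ near $c/2$ or near the vertex of the quadratic—at which negativity can be read off.

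The heart of the argument, and the step I expect to be the main obstacle, is showing that the stated analytic inequality
\[
	0 < \frc{\sqrt{v}/2} < 1/4 + ( \sqrt{v}-\sqrt{v+5/4} )/2
\]
is precisely what makes the clique adjacency polynomial dip below zero at the chosen integer point. The left inequality $0<\{\sqrt v/2\}$ ensures $\sqrt v$ is not an even integer (and combined with the right inequality, that $\sqrt v$ is genuinely non-integral), which is needed both for the floor identities above and to avoid the Blokhuis equality case. The right-hand bound is the delicate part: after writing $c=\lfloor\sqrt v\rfloor$ and setting $\sqrt v = c + \theta$ with $\theta\in[0,1)$, the negativity condition on the polynomial should reduce, after clearing denominators and using $k,\lambda,\mu$ as above, to a comparison between $\theta$ (equivalently the fractional part) and an expression in $v$; the term $\sqrt v - \sqrt{v+5/4}$ is exactly the kind of correction that arises from comparing $\sqrt v$ with $\sqrt{v+5/4}=\sqrt{4\mu+1}$-type quantities appearing in the eigenvalue formulas. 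I would therefore aim to massage the polynomial's negativity criterion into the closed form on the right-hand side by a careful but routine algebraic manipulation, taking care that the chosen integer argument $x$ genuinely lies in the admissible range demanded by the clique adjacency bound. The plausibility of matching the bound exactly suggests the authors chose $x$ and the estimates so that the inequality is not merely sufficient but essentially equivalent to negativity of the polynomial at that point.
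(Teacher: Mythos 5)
Your proposal is correct and takes essentially the same route as the paper: the authors likewise apply the clique adjacency bound with candidate clique size $\lfloor\sqrt v\rfloor$ at the integer point $x=\lfloor(\sqrt v-1)/2\rfloor$ (your ``$x$ near $c/2$''), and the manipulation you defer is carried out there via the exact factorization $C_\Gamma(r-t,\,2+2r-2t)=(t+s)(2t^2+(4s-1)t-3s-1)$ with $t=\frc{r}$, whose smallest root is $3/4+(\sqrt v-\sqrt{v+5/4})/2$, so the hypothesis (which forces $1/2<t<3/4+(\sqrt v-\sqrt{v+5/4})/2$ and the non-integrality of $\sqrt v$) yields negativity exactly as you predict. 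One minor calibration: Theorem~\ref{thm:CliqueAdjPol} holds for \emph{all} integers $b$, so the ``admissible range'' you worry about for the integer argument is vacuous.
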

\begin{proof}
	Follows from Theorem~\ref{thm:CliqueAdjPol} together with Corollary~\ref{cor:beatDelsarte} below.
\end{proof}

\begin{remark}
	For a prime $p$ satisfying $0 < \frc{\sqrt{p}/2} < 1/4 + ( \sqrt{p}-\sqrt{p+5/4} )/2$, we have that $\lfloor \sqrt{p} \rfloor = 2\lfloor \sqrt{p}/2 \rfloor$ is even.
	Furthermore, for $n := \lfloor \sqrt{p} \rfloor$, since $\lfloor \sqrt{p} \rfloor > \sqrt{p+5/4} - 1/2$, we have $n^2 + n - 1 > p$.
	Hence, if $\Gamma$ is a Paley graph on $p$ vertices then $\omega(\Gamma) \leqslant \lfloor \sqrt{p} - 1 \rfloor$ by Bachoc et al.~\cite[Theorem 2.1 (i)]{Bach:Paley13} (see \cite[Remark 2.5]{Bach:Paley13}).
	Therefore, for type-I strongly regular graphs with $p$ (a prime) vertices, satisfying $0 < \frc{\sqrt{p}/2} < 1/4 + ( \sqrt{p}-\sqrt{p+5/4} )/2$, Theorem~\ref{thm:beatDelsarteConf} is a generalisation of Bachoc et al.~\cite[Theorem 2.1 (i)]{Bach:Paley13}.
\end{remark}

\begin{remark}
	Let $g$ be a positive integer.
	Then $(1+4g,2g,g-1,g)$ is a feasible parameter tuple for a type-I strongly regular graph on $v = 1+4g$ vertices. 
	Observe that $( \sqrt{v}-\sqrt{v+5/4} )/2$ tends to $0$ as $v$ tends to infinity.
	Using Fej\'{e}r's theorem (see Kuipers and Niederreiter~\cite[page 13]{Kuip:1974uniform}), it is straightforward to show that the sequence $(\sqrt{1+4g}/2)_{g \in \mathbb{N}}$ is uniformly distributed modulo $1$.
	Therefore we can apply Theorem~\ref{thm:beatDelsarteConf} to about a quarter of all feasible parameter tuples for type-I strongly regular graphs.
	
	Let $\mathcal P$ denote the set of all primes $p$ of the form $p = 1+4g$ for some $g \in \mathbb N$.
	Then the sequence $(\sqrt{p}/2)_{p \in \mathcal{P}}$ is uniformly distributed modulo $1$ (see Balog~\cite[Theorem 1]{Balog:85}).
	Therefore, since Paley graphs on $p$ vertices exist for all $p \in \mathcal P$, Theorem~\ref{thm:beatDelsarteConf} is applicable to infinitely many strongly regular graphs.
\end{remark}

Note that the example in \cite{Soi:CAB15} with parameters $(65,32,15,16)$ is an example of a (potential) graph satisfying the hypothesis of Theorem~\ref{thm:beatDelsarteConf}.

A graph is called \textbf{co-connected} if its complement is connected.
We have the following:

\begin{theorem}\label{thm:beatDelsarteNonConf}
	Let $\Gamma$ be a co-connected type-II strongly regular graph with parameters $(v,k,\lambda,\mu)$ and eigenvalues $k > r \geqslant s$.
	Suppose that 
	\[
		0 < \frc{-k/s} < 1 - (r^2 + r)/(v - 2k + \lambda).
	\]
	Then $\omega(\Gamma) \leqslant \lfloor -k/s \rfloor$.
\end{theorem}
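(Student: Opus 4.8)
The plan is to rule out a clique of the Delsarte size. Since $\frc{-k/s}>0$ forces $-k/s\notin\Z$, we have $\lfloor 1-k/s\rfloor=\lfloor -k/s\rfloor+1=:m$, so it suffices to show that $\Gamma$ has no clique of size $m$; then $\omega(\Gamma)\le m-1=\lfloor -k/s\rfloor$. The tool is the clique adjacency polynomial. For a clique $C$ of size $m$ and a vertex $w\notin C$, let $a_w$ denote the number of neighbours of $w$ in $C$. Double counting via edge-regularity gives $\sum_{w\notin C}a_w=m(k-m+1)$ and $\sum_{w\notin C}a_w(a_w-1)=m(m-1)(\lambda-m+2)$, whence
\[
\sum_{w\notin C}(a_w-x)(a_w-x-1)=(v-m)x(x+1)-2m(k-m+1)x+m(m-1)(\lambda-m+2)=:C(x,m).
\]
For every integer $x$ each summand is a product of two consecutive integers, so $C(x,m)\ge 0$ whenever a clique of size $m$ exists. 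Thus it suffices to exhibit a non-negative integer $x_0$ with $C(x_0,m)<0$.

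Next I would pass to eigenvalue coordinates. Writing $n:=-s$ and using the standard identities $\mu=k+rs$, $\lambda=k+rs+r+s$ and $v=(k-r)(k-s)/\mu$, one checks $-k/s=r+\mu/n$. Setting $q:=\lfloor\mu/n\rfloor$ and $\rho:=\mu-qn\in\{0,\dots,n-1\}$, integrality of $r$ gives $m=r+q+1$ and $\frc{-k/s}=\rho/n$; the hypothesis $\frc{-k/s}>0$ means $\rho\ge 1$. Co-connectedness guarantees $v-2k+\lambda>0$, and a short computation yields the clean identity $v-2k+\lambda=r(r+1)n(n-1)/\mu$, which in turn forces $r\ge 1$ and $n\ge 2$.

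The heart of the argument is to evaluate $C$ at $x_0=q$, a non-negative integer. After substituting $k-m+1=(r+q)(n-1)+\rho$, $\lambda-m+2=(q-1)(n-1)+\rho$ and $v-m=(2r+q+1)(n-1)+\rho+(v-2k+\lambda)$, the cubic-in-$m$ terms should collapse and I expect to obtain
\[
C(q,m)=r(r+1)\Big[\rho-(q+1)(n-1)+\tfrac{q(q+1)n(n-1)}{\mu}\Big].
\]
Multiplying the bracket by $\mu>0$ and using $\mu=qn+\rho$ telescopes it to $\rho\big(q+\rho-(n-1)\big)$. Since $r(r+1)>0$, $\mu>0$ and $\rho\ge 1$, this gives $C(q,m)<0\iff q+\rho<n-1$. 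Finally, substituting $r(r+1)/(v-2k+\lambda)=\mu/\big(n(n-1)\big)$ and $\frc{-k/s}=\rho/n$ shows that the hypothesis $\frc{-k/s}<1-r(r+1)/(v-2k+\lambda)$ is equivalent to $q+\rho<n-1$. Hence the hypothesis forces $C(q,m)<0$, contradicting the existence of a clique of size $m$.

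The main obstacle is this middle step: correctly guessing the evaluation point $x_0=q=\lfloor\mu/n\rfloor$ and then verifying the two crucial cancellations, namely that the coefficient of $(n-1)$ collapses to exactly $-r(r+1)$ and that the whole bracket telescopes to the single product $\rho\big(q+\rho-(n-1)\big)$. These cancellations are precisely what make the cleanly stated hypothesis equal to the sign condition for $C(q,m)$; everything else reduces to routine manipulation of the strongly regular graph identities.
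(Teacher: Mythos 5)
Your proposal is correct and takes essentially the same route as the paper: you rederive Theorem~\ref{thm:CliqueAdjPol} by double counting, then evaluate the clique adjacency polynomial at exactly the paper's point $(\lfloor -\mu/s\rfloor,\lfloor 1-k/s\rfloor)=(q,r+q+1)$, and your key identity $\mu\,C_\Gamma(q,m)=r(r+1)\,\rho\,\bigl(q+\rho-(n-1)\bigr)$ (which I checked; the cancellations you flag do occur, the coefficient of $n-1$ collapsing to $-(q+1)r(r+1)$) is precisely Equation~\eqref{eqn:SRGPol} of Lemma~\ref{lem:CAP1} at $t=\frc{-\mu/s}=\rho/n$ combined with the identity $\mu(v-2k+\lambda)=(r^2+r)(s^2+s)$ of Remark~\ref{rem:4.10}. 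The differences are presentational: the paper verifies its identity by Gr\"obner-basis computation in the continuous parameter $t$ and concludes via Corollary~\ref{cor:beatDel01}, whereas your integer coordinates $(n,q,\rho)$ give a hand-checkable telescoping and show the hypothesis is in fact equivalent to $C_\Gamma(q,m)<0$ (given $\rho\geqslant 1$).
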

\begin{proof}
	Follows from Theorem~\ref{thm:CliqueAdjPol} together with Corollary~\ref{cor:beatDel01} below.
\end{proof}

\begin{remark}
	Currently Brouwer~\cite{Brouwer:URL} lists the feasible parameter tuples for connected and co-connected strongly regular graphs on up to $1300$ vertices.
	Of these, about $1/8$ of the parameter tuples of type-II strongly regular graphs satisfy the hypothesis of Theorem~\ref{thm:beatDelsarteNonConf}.
	By the remark following Corollary~\ref{cor:beatDel01}, it follows that Theorem~\ref{thm:beatDelsarteNonConf} can be applied to about $1/4$ of the complementary pairs of type-II strongly regular graphs on Brouwer's list.
\end{remark}

Note that the example in \cite{Soi:CAB15} of a strongly regular graph with parameter tuple $(144,39,6,12)$ is an example of a graph satisfying the hypothesis of Theorem~\ref{thm:beatDelsarteNonConf}; in fact, in this case, the conclusion of Theorem~\ref{thm:beatDelsarteNonConf} is satisfied with equality.
The parameter tuple $(88,27,6,9)$ is the first parameter tuple in Brouwer's list to which we can apply Theorem~\ref{thm:beatDelsarteNonConf} and whose corresponding graphs are not yet known to exist (or not exist).

\section{Parameters of strongly regular graphs} 
\label{sec:parameters_of_strongly_regular_graphs}

Here we state some well-known properties of strongly regular graphs and their parameters.
The first two propositions are standard (see Brouwer and Haemers~\cite[Chapter 9]{brou:spec11} or Cameron and Van Lint~\cite[Chapter 2]{CvL}).

\begin{proposition}\label{pro:SRG}
	Let $\Gamma$ be a strongly regular graph with parameters $(v,k,\lambda,\mu)$ and eigenvalues $k > r \geqslant s$.
	Then
	\begin{align*}
		(v-k-1)\mu &= k(k-\lambda-1); \\
		\lambda-\mu &= r+s; \\
		\mu - k &= rs.
	\end{align*}
\end{proposition}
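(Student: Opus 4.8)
The plan is to derive all three identities at once from the fundamental quadratic matrix equation satisfied by the adjacency matrix $A$ of $\Gamma$. Write $I$ for the identity matrix, $J$ for the all-ones matrix, and $\mathbf{1}$ for the all-ones vector, each of size $v$. First I would establish
\[
	A^2 = kI + \lambda A + \mu(J - I - A)
\]
by reading off the entries of $A^2$ as counts of walks of length two: the $(u,u)$-entry equals the degree $k$; the $(u,w)$-entry for adjacent $u,w$ equals the number $\lambda$ of their common neighbours; and the $(u,w)$-entry for distinct non-adjacent $u,w$ equals $\mu$. Since $J-I-A$ has entry $1$ exactly at the distinct non-adjacent pairs and $0$ on the diagonal, the displayed equation holds entrywise. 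This is the only place an appeal to the combinatorial hypotheses $(v,k,\lambda,\mu)$ is made; everything afterwards is linear algebra.

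Next I would rewrite the equation in the form $A^2 = (k-\mu)I + (\lambda-\mu)A + \mu J$ and evaluate it on two orthogonal families of vectors. Applying both sides to $\mathbf{1}$ and using $A\mathbf{1}=k\mathbf{1}$ and $J\mathbf{1}=v\mathbf{1}$ gives $k^2 = (k-\mu) + (\lambda-\mu)k + \mu v$; collecting terms turns this linear relation into $k(k-\lambda-1)=\mu(v-k-1)$, which is the first identity. For the remaining two I would take any eigenvector $x$ with $Ax=\theta x$ and $\theta\neq k$. Because $A$ is symmetric and $\mathbf{1}$ is an eigenvector for the eigenvalue $k$, such an $x$ is orthogonal to $\mathbf{1}$, so $Jx=0$, and the matrix equation collapses to $\theta^2=(\lambda-\mu)\theta+(k-\mu)$. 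Hence every eigenvalue of $A$ other than $k$ is a root of
\[
	t^2-(\lambda-\mu)t-(k-\mu)=0 ,
\]
and applying Vieta's formulas to this monic quadratic yields $r+s=\lambda-\mu$ and $rs=-(k-\mu)=\mu-k$, the second and third identities.

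The one step that requires genuine care, rather than routine calculation, is the identification of $r$ and $s$ as \emph{exactly} the two roots of this quadratic. This relies on the spectral structure recalled in Section~\ref{sec:mainresults}: $\Gamma$ has at most three distinct eigenvalues $k>r\geqslant s$, so $r$ and $s$ are the only eigenvalues distinct from $k$. By the orthogonality argument above each of them satisfies the quadratic, and since a monic quadratic has at most two roots, $\{r,s\}$ accounts for the full root set (with a repeated root in the degenerate case $r=s$), so Vieta's relations apply without loss. I expect this bookkeeping — rather than the matrix computation — to be the delicate point, since it is where the convention $k>r\geqslant s$ and the three-eigenvalue property are actually used; once it is in place, all three identities follow simultaneously from the single equation for $A^2$.
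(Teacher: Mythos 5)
Your proof is correct and is essentially the paper's own, in the sense that the paper states this proposition without proof as standard, citing Brouwer--Haemers and Cameron--van Lint, and the argument in those references is exactly your route: the entrywise identity $A^2 = kI + \lambda A + \mu(J-I-A)$, evaluation at the all-ones vector for the first relation, and restriction to $\mathbf{1}^{\perp}$ to obtain the quadratic $t^2-(\lambda-\mu)t-(k-\mu)$ whose roots give the other two via Vieta. Your care about $\{r,s\}$ exhausting the root set is well placed, with one refinement: under the stated hypotheses the degenerate case $r=s$ never occurs (a connected non-complete regular graph cannot have just two distinct eigenvalues, and for a disconnected strongly regular graph, a disjoint union of complete graphs, one root of your quadratic is $k$ itself, which is why the literature's convention there takes $r,s$ to be the two roots of the quadratic rather than the two sub-$k$ eigenvalues), so Vieta in fact always applies with two distinct roots.
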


\begin{proposition}\label{pro:SRGtI}
	Let $\Gamma$ be a type-I strongly regular graph with parameters $(v,k,\lambda,\mu)$ and eigenvalues $k > r > s$.
	Then
	\begin{align*}
		k = (v-1)/2; \quad \lambda = (v-&5)/4; \quad \mu = (v-1)/4; \\
		r = (\sqrt{v}-1)/2; \quad\quad\quad & \quad\quad s = -(\sqrt{v}+1)/2.
	\end{align*}
\end{proposition}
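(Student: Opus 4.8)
The plan is to treat the two halves of the statement separately. The formulas $k=(v-1)/2$, $\lambda=(v-5)/4$, and $\mu=(v-1)/4$ require no argument: they are precisely the defining conditions for a type-I strongly regular graph as recorded earlier in the excerpt, so I would simply note that they hold by definition. All the actual work lies in verifying the two eigenvalue expressions, and for this I would lean entirely on Proposition~\ref{pro:SRG}.

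The key observation is that the second and third identities of Proposition~\ref{pro:SRG}, namely $\lambda-\mu=r+s$ and $\mu-k=rs$, express the elementary symmetric functions of the pair $\{r,s\}$ in terms of $v$. First I would substitute the type-I values to obtain the two numbers explicitly. For the sum,
\[
	r+s = \lambda-\mu = \frac{v-5}{4}-\frac{v-1}{4} = -1,
\]
and for the product,
\[
	rs = \mu-k = \frac{v-1}{4}-\frac{v-1}{2} = -\frac{v-1}{4}.
\]
Thus $r$ and $s$ are the two roots of the quadratic $t^2-(r+s)t+rs = t^2+t-(v-1)/4$.

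Next I would solve this quadratic directly. Its discriminant is $1+(v-1)=v$, so the roots are $t=(-1\pm\sqrt{v})/2$. Since we are given the ordering $k>r>s$, the larger root must be $r$ and the smaller must be $s$, which yields $r=(\sqrt{v}-1)/2$ and $s=-(\sqrt{v}+1)/2$, exactly as claimed. This completes the proof.

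I expect no genuine obstacle here, as the entire argument is a short symmetric-function computation followed by solving a quadratic; the only points meriting a word of care are the simplification $\sqrt{1+(v-1)}=\sqrt{v}$ and the use of the prescribed ordering $r>s$ to match each root with the correct eigenvalue.
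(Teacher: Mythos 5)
Your proof is correct: the first three identities are indeed just the paper's definition of a type-I graph, and recovering $r$ and $s$ as the roots of $t^2+t-(v-1)/4$ via $\lambda-\mu=r+s$ and $\mu-k=rs$ is exactly the standard argument. The paper gives no proof of its own, citing this proposition as standard (Brouwer--Haemers, Cameron--Van Lint), and your derivation is precisely the one found in those references, so there is nothing to add.
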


The next proposition is a key observation.

\begin{proposition}\label{pro:fracPart}
	Let $\Gamma$ be a strongly regular graph with parameters $(v,k,\lambda,\mu)$ and eigenvalues $k > r \geqslant s$.
	\begin{enumerate}[(i)]
		\item If $\Gamma$ is type I then $k/s - 2\frc{\mu/s}$ is an integer.
		\item If $\Gamma$ is type II then $k/s-\frc{\mu/s}$ is an integer.
	\end{enumerate}
\end{proposition}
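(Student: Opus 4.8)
The plan is to reduce each fractional part via the defining identity $\frc{\mu/s} = \mu/s - \lfloor \mu/s \rfloor$ and then exhibit the resulting combination as a manifestly integral quantity, using the parameter relations of Propositions~\ref{pro:SRG} and~\ref{pro:SRGtI}. Since $s < 0$ in both cases, division by $s$ causes no trouble. The point to keep in mind is that for type-I graphs $s = -(\sqrt{v}+1)/2$ is typically irrational, so neither $k/s$ nor $\frc{\mu/s}$ need be rational; the content of the statement is that a specific rational combination of them is nevertheless an integer.

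For part (i) I would first extract from Proposition~\ref{pro:SRGtI} the values $k = (v-1)/2$ and $\mu = (v-1)/4$, which give the crucial relation $k = 2\mu$. Then
\[
    k/s - 2\frc{\mu/s} = k/s - 2\bigl(\mu/s - \lfloor \mu/s \rfloor\bigr) = (k - 2\mu)/s + 2\lfloor \mu/s \rfloor.
\]
As $k - 2\mu = 0$, the first term vanishes and the expression equals $2\lfloor \mu/s \rfloor$, which is an integer.

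For part (ii) I would instead use the identity $\mu - k = rs$ from Proposition~\ref{pro:SRG}, which yields $(k-\mu)/s = -r$. Hence
\[
    k/s - \frc{\mu/s} = (k-\mu)/s + \lfloor \mu/s \rfloor = -r + \lfloor \mu/s \rfloor.
\]
Since $\Gamma$ is type II, its eigenvalue $r$ is an integer, and $\lfloor \mu/s \rfloor$ is an integer by definition, so the whole expression lies in $\Z$.

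In both parts the computation is essentially a one-line cancellation, so I do not expect a genuine obstacle. The only thing to get right is the choice of parameter identity in each case: that $k = 2\mu$ for type-I graphs (making the offending $k/s$ term disappear after subtracting twice $\mu/s$), and that $\mu - k = rs$ with $r$ integral for type II (so that the $\mu/s$ term exactly accounts for the non-integer part of $k/s$).
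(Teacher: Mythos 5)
Your proof is correct and is essentially the paper's own argument: the paper's (terser) proof likewise reduces part (i) to the type-I identity $k-2\mu=0$ from Proposition~\ref{pro:SRGtI} and part (ii) to $k/s-\mu/s=-r$ from Proposition~\ref{pro:SRG} together with the integrality of $r$ for type-II graphs. You have merely written out explicitly the cancellation $\frc{\mu/s}=\mu/s-\lfloor\mu/s\rfloor$ that the paper leaves implicit.
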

\begin{proof}
	If $\Gamma$ is type I then, by Proposition~\ref{pro:SRGtI}, we have $k-2\mu = 0$.
	If $\Gamma$ is type II then, by Proposition~\ref{pro:SRG}, we have $k/s-\mu/s = -r$ and $r$ is an integer. 
\end{proof}

Next, the complement $\overline \Gamma$ of a strongly regular graph $\Gamma$ is also a strongly regular graph.
This is again a standard result (see Cameron and Van Lint~\cite[Chapter 2]{CvL}).

\begin{proposition}\label{pro:comp}
	Let $\Gamma$ be a connected and co-connected strongly regular graph with parameters $(v,k,\lambda,\mu)$ and eigenvalues $k > r > s$.
	Then $\overline \Gamma$ is strongly regular with parameters $(v,v-k-1,v-2k+\mu-2,v-2k+\lambda)$ and eigenvalues $v-k-1 > -s-1 > -r-1$.
\end{proposition}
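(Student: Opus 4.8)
The plan is to verify directly that $\overline\Gamma$ meets the definition of a strongly regular graph and then to read off its eigenvalues from those of $\Gamma$. First I would note that, having three distinct eigenvalues $k>r>s$, the graph $\Gamma$ is connected and non-complete, so $1\le k\le v-2$; hence $\overline\Gamma$ is $(v-k-1)$-regular with $1\le v-k-1\le v-2$, and in particular $\overline\Gamma$ is non-complete and has at least one edge. Regularity of degree $v-k-1$ is immediate, since each vertex is non-adjacent in $\Gamma$ to exactly $v-1-k$ of the remaining vertices.

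Next I would compute the two adjacency constants of $\overline\Gamma$ by a counting argument. Fix vertices $x,y$. If $x,y$ are adjacent in $\overline\Gamma$ (equivalently, non-adjacent in $\Gamma$), then a common neighbour in $\overline\Gamma$ is a vertex $z\notin\{x,y\}$ non-adjacent in $\Gamma$ to both $x$ and $y$. Among the $v-2$ candidate vertices, $x$ and $y$ each have $k$ neighbours in $\Gamma$ (none equal to $x$ or $y$, as $x,y$ are non-adjacent), with exactly $\mu$ in common, so inclusion--exclusion yields $(v-2)-(2k-\mu)=v-2k+\mu-2$ vertices adjacent in $\Gamma$ to neither. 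If instead $x,y$ are non-adjacent in $\overline\Gamma$ (adjacent in $\Gamma$), the same count, now with $k-1$ neighbours of each among the candidates and $\lambda$ in common, gives $(v-2)-\bigl(2(k-1)-\lambda\bigr)=v-2k+\lambda$. As these totals are independent of the chosen pair, $\overline\Gamma$ is strongly regular with parameters $(v,\,v-k-1,\,v-2k+\mu-2,\,v-2k+\lambda)$.

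For the eigenvalues I would pass to adjacency matrices. Writing $A$ for the adjacency matrix of $\Gamma$ and $J$ for the all-ones matrix, the adjacency matrix of $\overline\Gamma$ is $\overline A=J-I-A$. The all-ones vector $\mathbf 1$ spans the $k$-eigenspace of $A$ and satisfies $J\mathbf 1=v\mathbf 1$, so $\overline A\,\mathbf 1=(v-k-1)\mathbf 1$. On the orthogonal complement of $\mathbf 1$ we have $Ju=0$, hence any $A$-eigenvector $u$ there with $Au=\theta u$ satisfies $\overline A\,u=-(1+\theta)u$; applying this with $\theta=r$ and $\theta=s$ produces the eigenvalues $-r-1$ and $-s-1$ of $\overline\Gamma$. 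Since $r>s$ we have $-s-1>-r-1$, and because $\overline\Gamma$ is connected (this is the co-connectedness hypothesis on $\Gamma$) its degree $v-k-1$ is a simple eigenvalue strictly exceeding all others, which fixes the ordering $v-k-1>-s-1>-r-1$.

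The computations are entirely routine; the only points requiring care are the bookkeeping of whether $x$ or $y$ lies in the relevant neighbour sets (handled by the adjacent/non-adjacent casework above) and the appeal to connectivity of $\overline\Gamma$ to guarantee that its degree is the strictly largest eigenvalue, so that the eigenvalues sort as claimed. I do not anticipate a genuine obstacle.
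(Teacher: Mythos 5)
Your proposal is correct and is exactly the standard argument: the paper gives no proof of Proposition~\ref{pro:comp}, deferring to Cameron and Van Lint, and the cited proof is precisely your combination of the inclusion--exclusion count for the complementary parameters with the spectral computation via $\overline{A}=J-I-A$ on $\mathbf{1}$ and its orthogonal complement. You also correctly isolate the one point where the hypotheses matter, namely that co-connectedness is what makes $v-k-1$ a simple eigenvalue and hence forces the strict ordering $v-k-1>-s-1>-r-1$ (which fails, e.g., for complete multipartite $\Gamma$).
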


Finally we state some straightforward bounds for the parameters of strongly regular graphs.

\begin{proposition}\label{pro:SRGbound}
	Let $\Gamma$ be a strongly regular graph with parameters $(v,k,\lambda,\mu)$.
	Then 
	\begin{enumerate}[(i)]
		\item $v - 2k + \lambda \geqslant 0$ with equality if and only if $\Gamma$ is complete multipartite;
		\item $k - \lambda - 1 \geqslant 0$ with equality if and only if $\overline \Gamma$ is complete multipartite.
	\end{enumerate}
\end{proposition}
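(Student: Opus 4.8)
The plan is to prove both inequalities by a single double-counting argument anchored at an edge, and then to read off the equality characterisations from the same count. Throughout I fix an arbitrary edge $xy$ of $\Gamma$; such an edge exists because $\Gamma$ is edge-regular and non-complete, so $k \geqslant 1$. No connectivity hypothesis is needed, and I would rely on the fact that the paper's definition of a strongly regular graph permits disconnected graphs (disjoint unions of equal cliques), so that the bare combinatorial count suffices in every case.

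For part (i), I would count the vertices lying outside the closed neighbourhoods of $x$ and $y$. By inclusion--exclusion, the number of vertices that equal $x$ or $y$, or are adjacent to at least one of them, is $2 + (k-1) + (k-1) - \lambda = 2k - \lambda$, the term $\lambda$ correcting for the doubly-counted common neighbours. Hence the number of vertices distinct from $x,y$ and adjacent to neither is exactly $v - 2k + \lambda$, which is manifestly nonnegative. Equality holds precisely when, for every edge $xy$, every other vertex is adjacent to $x$ or to $y$. I would then show that this forces $\Gamma$ to be complete multipartite: define $u \equiv w$ to mean $u = w$ or $u \not\sim w$. Reflexivity and symmetry are clear, and transitivity is exactly the content of the equality condition, since a triple $u \equiv w \equiv t$ with $u \sim t$ would exhibit $w$ as a vertex adjacent to neither endpoint of the edge $ut$. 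The equivalence classes are then pairwise completely joined cocliques, so $\Gamma$ is complete multipartite; conversely, substituting the parameters $v = mn$, $k = (m-1)n$, $\lambda = (m-2)n$ of a strongly regular complete multipartite graph $K_{m\times n}$ gives $v - 2k + \lambda = 0$.

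For part (ii) I would instead count, among the $k-1$ neighbours of $x$ other than $y$, those that are not adjacent to $y$; since exactly $\lambda$ of them are common neighbours, this number equals $k - \lambda - 1 \geqslant 0$. Equality means every neighbour of $x$ distinct from $y$ is adjacent to $y$, which says that any two neighbours of a common vertex are adjacent; hence every closed neighbourhood is a clique, $\Gamma$ is a disjoint union of equal cliques, and so $\overline{\Gamma}$ is complete multipartite. The converse is the computation $k - \lambda - 1 = (n-1) - (n-2) - 1 = 0$ for a disjoint union of copies of $K_n$. One may also record the consistency $k-\lambda-1 = 0 \iff \mu = 0$, which follows at once from the identity $(v-k-1)\mu = k(k-\lambda-1)$ of Proposition~\ref{pro:SRG} and $k \geqslant 1$.

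I expect the inequalities themselves to be entirely routine, as each is a count of a nonnegative quantity. The \emph{main obstacle} will be the equality characterisations: arguing cleanly that the equality condition in (i) produces a transitive relation and hence a genuine complete multipartite structure, and handling the definitional edge cases—namely checking that an edge exists so that at least two parts (respectively two cliques) arise, and verifying that the resulting graph, being strongly regular, forces equal part sizes so that the parameter computation closes up.
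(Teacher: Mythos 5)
Your proof is correct, but note that the paper itself offers no proof of Proposition~\ref{pro:SRGbound}: the authors introduce it with ``Finally we state some straightforward bounds'' and leave the verification to the reader, so there is no official argument to match yours against. Your double count at a fixed edge is the standard elementary route and it is sound: the inclusion--exclusion giving $v-2k+\lambda$ as the number of vertices adjacent to neither endpoint, and $k-\lambda-1$ as the number of neighbours of $x$ missed by $y$, are both right, and since these counts depend only on the parameters, equality in either one forces the corresponding local condition at \emph{every} edge, which your equivalence-relation argument (for (i)) and clique-neighbourhood argument (for (ii)) then correctly convert into the complete multipartite characterisations; the only degenerate cases in your transitivity step, $w\in\{u,t\}$, are trivially fine. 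An alternative, and probably the route the authors had in mind given the surrounding material, is spectral/parametric: by Proposition~\ref{pro:comp} the quantity $v-2k+\lambda$ is exactly the $\mu$-parameter $\overline{\mu}$ of $\overline{\Gamma}$, so $v-2k+\lambda\geqslant 0$ is immediate, with $\overline{\mu}=0$ if and only if $\overline{\Gamma}$ is disconnected, i.e.\ $\Gamma$ is complete multipartite; dually, the identity $(v-k-1)\mu=k(k-\lambda-1)$ of Proposition~\ref{pro:SRG} (which you record as a consistency check) gives $k-\lambda-1\geqslant 0$ with equality if and only if $\mu=0$, i.e.\ $\Gamma$ is a disjoint union of equal cliques. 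That route is shorter modulo the quoted standard facts; yours is self-contained and makes the combinatorial meaning of both quantities explicit, which is arguably preferable for a ``straightforward'' proposition the paper uses repeatedly (e.g.\ in Lemmas~\ref{lem:CAP1} and~\ref{lem:NoOtherZeros}).
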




\section{The clique adjacency polynomial} 
\label{sec:evaluating_the_clique_adjacency_polynomial}

Now we define our main tool, the clique adjacency polynomial.
Given an edge-regular graph $\Gamma$ with parameters $(v,k,\lambda)$, define the \textbf{clique adjacency polynomial} $C_\Gamma(x,y)$ as
\[
	C_\Gamma(x,y) := x(x+1)(v-y) - 2xy(k-y+1) + y(y-1)(\lambda-y+2).
\]

The utility of the clique adjacency polynomial follows from~\cite[Theorem 1.1]{Soi:10} (see also~\cite[Theorem 3.1]{Soi:CAB15}), giving:

\begin{theorem}\label{thm:CliqueAdjPol}
	Let $\Gamma$ be an edge-regular graph with parameters $(v,k,\lambda)$.
	Suppose that $\Gamma$ has a clique of size $c \geqslant 2$.
	Then $C_\Gamma(b,c) \geqslant 0$ for all integers $b$.
\end{theorem}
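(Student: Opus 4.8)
The plan is to prove the inequality by a double-counting argument, interpreting $C_\Gamma(b,c)$ as a manifestly nonnegative sum of products of consecutive integers. First I would fix a clique $C$ of size $c$ in $\Gamma$ and, for each vertex $w \notin C$, set $d_w := |N(w) \cap C|$, the number of neighbours of $w$ lying in the clique. The crux of the argument is that three basic counts involving the $d_w$ are completely determined by the edge-regularity parameters $(v,k,\lambda)$ together with $c$, so no further structural information about $\Gamma$ is needed.

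Second, I would compute these three counts. The number of vertices outside $C$ is $\sum_{w \notin C} 1 = v - c$. For the first moment, I count the edges joining $C$ to its complement: each of the $c$ vertices of $C$ has $c-1$ neighbours inside $C$ and therefore exactly $k - c + 1$ neighbours outside, giving $\sum_{w \notin C} d_w = c(k - c + 1)$. For the second moment, I count over $w \notin C$ the ordered pairs of distinct vertices of $C$ that are both adjacent to $w$; since $C$ is a clique, any two of its vertices are adjacent and hence share $\lambda$ common neighbours, of which $c - 2$ lie inside $C$, so each of the $\binom{c}{2}$ pairs contributes $\lambda - c + 2$ external common neighbours. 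This yields $\sum_{w \notin C} d_w(d_w - 1) = c(c-1)(\lambda - c + 2)$. (Note that $c \geqslant 2$ is what makes the pair count nonempty.)

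Third, I would invoke integrality. For every integer $b$, the quantity $(d_w - b)(d_w - b - 1)$ is a product of two consecutive integers and is therefore nonnegative, so summing over all $w \notin C$ gives $\sum_{w \notin C}(d_w - b)(d_w - b - 1) \geqslant 0$. Expanding the summand as $(d_w - b)(d_w - b - 1) = d_w(d_w - 1) - 2b\,d_w + b(b+1)$ and substituting the three counts from the previous step produces $c(c-1)(\lambda - c + 2) - 2b\,c(k - c + 1) + b(b+1)(v - c)$, which is exactly $C_\Gamma(b,c)$. Thus $C_\Gamma(b,c) = \sum_{w \notin C}(d_w - b)(d_w - b - 1) \geqslant 0$, completing the proof.

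The main obstacle is really the second-moment computation: getting the external common-neighbour count right hinges on subtracting off the $c-2$ common neighbours forced to lie inside the clique, and it is precisely here that edge-regularity (a constant number $\lambda$ of common neighbours for every adjacent pair) is used. The remaining work is the routine algebraic verification that the expanded sum coincides with the definition of $C_\Gamma(b,c)$; the conceptual content is the recognition that a sum of products of consecutive integers is nonnegative, and that this sum is forced by the parameters alone. Since only edge-regularity is invoked, the statement holds in the stated generality, without assuming strong regularity.
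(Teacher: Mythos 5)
Your proof is correct. It is worth noting that the paper does not prove this theorem directly: it invokes the second author's general theorem on block intersection polynomials \cite[Theorem 1.1]{Soi:10}, of which the clique adjacency polynomial statement is a special case (take the clique as a ``block'' and consider, for each $j$, the number $n_j$ of outside vertices with exactly $j$ neighbours in the clique). Your argument is precisely the direct specialization of that machinery: your three counts $\sum_{w \notin C} 1 = v-c$, $\sum_{w \notin C} d_w = c(k-c+1)$, and $\sum_{w \notin C} d_w(d_w-1) = c(c-1)(\lambda-c+2)$ are the zeroth, first, and second factorial moments that feed into the block intersection polynomial, and the nonnegativity of $\sum_{w \notin C}(d_w-b)(d_w-b-1)$ as a sum of products of consecutive integers is exactly the mechanism behind the general theorem. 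What your version buys is self-containedness: a reader needs no acquaintance with block intersection polynomials, and the role of edge-regularity (the second-moment count, where the $c-2$ internal common neighbours are subtracted from $\lambda$) is laid bare. What the paper's citation buys is generality: the block intersection polynomial framework applies uniformly to block designs and to settings where the pairwise common-neighbour counts are not constant, at the cost of hiding the elementary counting. Your moment computations and the algebraic identification of the expanded sum with $C_\Gamma(b,c)$ all check out, so there is no gap.
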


As discussed in \cite{Soi:10} and \cite{Soi:CAB15}, Theorem~\ref{thm:CliqueAdjPol} provides a way of bounding the clique number of an edge-regular graph using only its parameters.
Indeed, by Theorem~\ref{thm:CliqueAdjPol}, for an edge-regular graph $\Gamma$ and some integer $c \geqslant 2$, if there exists an integer $b$ such that $C_\Gamma(b,c) < 0$ then $\omega(\Gamma) \leqslant c-1$.
Hence we define the \textbf{clique adjacency bound} (CAB) to be the least integer $c \geqslant 2$ such that $C_\Gamma(b,c+1) < 0$ for some $b \in \mathbb Z$; note that such a $c$ always exists.

We will show that, for a $k$-regular strongly regular graph $\Gamma$, the clique adjacency bound gives $\omega(\Gamma) \leqslant \lfloor 1 -k/s \rfloor$.
That is, the clique adjacency bound is always at least as good as the Delsarte bound when applied to strongly regular graphs.
This follows from Theorem~\ref{thm:CliqueAdjPol} together with Theorem~\ref{thm:CAB-Del} below.
More interestingly, we will also show that the clique adjacency bound does better than the Delsarte bound for infinitely many feasible parameter tuples for strongly regular graphs.
In this section we consider the univariate polynomial $C_\Gamma(f(t), g(t))$ in the variable $t$, where $f(t)$ and $g(t)$ are linear polynomials in $t$.
The main idea is to choose the linear polynomials $f$ and $g$ such that there exists $t \in \R$ such that $C_\Gamma(f(t), g(t)) < 0$, $f(t) \in \Z$, and $g(t)$ is an integer at least $2$.
We begin by stating one of the main results of this paper. 

\begin{theorem}\label{thm:CAB-Del}
	Let $\Gamma$ be a strongly regular graph with parameters $(v,k,\lambda,\mu)$ and eigenvalues $k > r \geqslant s$.
	Then, 
	$$C_\Gamma \left ( \left \lfloor -\mu/s \right \rfloor, \left \lfloor 2-k/s \right \rfloor \right ) < 0.$$
\end{theorem}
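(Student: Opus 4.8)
The plan is to evaluate $C_\Gamma$ directly at the stated integer point and exhibit its negativity; by Theorem~\ref{thm:CliqueAdjPol} and the definition of the clique adjacency bound this simultaneously shows that the bound is at most the Delsarte value $\lfloor 1-k/s\rfloor=\lfloor 2-k/s\rfloor-1$. Write $n:=-\mu/s$ and $d:=1-k/s$ for the (real) nexus and Delsarte values, and set $\beta:=\frc{-k/s}$ and $\gamma:=\frc{-\mu/s}$, so that the evaluation point is $\bigl(\lfloor n\rfloor,\lfloor 2-k/s\rfloor\bigr)=(n-\gamma,\;d+1-\beta)$ with $\beta,\gamma\in[0,1)$. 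Note $\lfloor 2-k/s\rfloor\geqslant 2$ and, since the Delsarte value sits far below $v$, that $v-\lfloor 2-k/s\rfloor>0$, so $x\mapsto C_\Gamma(x,\lfloor 2-k/s\rfloor)$ is an upward parabola.

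The key first step is a factorisation of $C_\Gamma$ along the ``Delsarte line'' $y=d$. Using only the identities $\mu-k=rs$ and $\lambda-\mu=r+s$ of Proposition~\ref{pro:SRG}, I expect to obtain
\[
   C_\Gamma(x,\,1-k/s)\;=\;\Bigl(v-1+\tfrac{k}{s}\Bigr)\Bigl(x+\tfrac{\mu}{s}\Bigr)\Bigl(x+\tfrac{\mu}{s}+1\Bigr),
\]
so that the two roots of $C_\Gamma(\cdot,d)$ are exactly $n=-\mu/s$ and $n-1$. Substituting $x=n-\gamma$ then yields the clean base value $C_\Gamma(n-\gamma,\,d)=-\,(v-d)\,\gamma(1-\gamma)\leqslant 0$, with equality precisely when $\gamma=0$.

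It remains to pass from $y=d$ to $y=d+1-\beta=\lfloor 2-k/s\rfloor$ while keeping $x=n-\gamma=\lfloor n\rfloor$ fixed. I would control this through the sign of $\partial_y C_\Gamma(\lfloor n\rfloor,y)$ on the interval $[d,\,d+1-\beta]$: since $C_\Gamma$ is cubic in $y$ this partial derivative is a downward parabola in $y$, and I expect it to be negative throughout the interval, so that $C_\Gamma$ is strictly decreasing in $y$ there and hence $C_\Gamma(\lfloor n\rfloor,\lfloor 2-k/s\rfloor)\leqslant C_\Gamma(\lfloor n\rfloor,d)\leqslant 0$. When $\gamma>0$ the base value is already strictly negative and we are done; when $\gamma=0$ the base value is $0$, but then $d+1-\beta>d$ strictly and the strict negativity of $\partial_y C_\Gamma(n,d)$ forces $C_\Gamma(n,\lfloor 2-k/s\rfloor)<0$. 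Proposition~\ref{pro:SRGtI} (in the type-I case) and, crucially, Proposition~\ref{pro:fracPart}, which couples $\beta$ and $\gamma$ ($\beta=\gamma$ in type~II, and $2\gamma-\beta\in\{0,1\}$ in type~I), are what let me bound the interval length $1-\beta$ against the available margin $\gamma(1-\gamma)$ and verify the sign of $\partial_y C_\Gamma$ uniformly in the parameters.

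The main obstacle is precisely this last step: establishing the sign of $\partial_y C_\Gamma(\lfloor n\rfloor,y)$ across the whole interval (equivalently, that the shift in $y$ cannot overturn the non-positive base value), uniformly over all feasible parameter tuples and both types. This reduces, after substituting the expressions of Propositions~\ref{pro:SRG}, \ref{pro:SRGtI} and~\ref{pro:fracPart}, to a polynomial inequality in $\beta,\gamma$ and the eigenvalues, which I would verify by the symbolic computation described in the appendix; the degenerate integer-Delsarte case $\gamma=0$ (which forces $\beta=0$, so that $y=d+1$ with $d$ an integer) is the cleanest sub-case and can be checked on its own.
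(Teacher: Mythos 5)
Your opening step is correct and is genuinely not in the paper: the factorisation
\[
C_\Gamma\bigl(x,\,1-k/s\bigr)=\Bigl(v-1+\tfrac{k}{s}\Bigr)\Bigl(x+\tfrac{\mu}{s}\Bigr)\Bigl(x+\tfrac{\mu}{s}+1\Bigr)
\]
does follow from Proposition~\ref{pro:SRG} (it passes the same normal-form check as the identities in Appendix~\ref{sec:comp}), and it gives your base value $C_\Gamma(\lfloor-\mu/s\rfloor,\,d)=-(v-d)\gamma(1-\gamma)\leqslant 0$. The gap is the bridging step, and it is not merely unverified --- it is false. The claim that $\partial_y C_\Gamma(\lfloor-\mu/s\rfloor,y)<0$ on all of $[d,\,d+1-\beta]$, i.e.\ that $C_\Gamma$ decreases in $y$ from the Delsarte line to the target point, fails for actual strongly regular graphs. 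Take $\Gamma=\overline{T(17)}$, the complement of the triangular graph $T(17)$ (this graph exists), with parameters $(136,105,78,91)$ and eigenvalues $r=1$, $s=-14$: then $-\mu/s=6.5$, $d=8.5$, $\beta=\gamma=1/2$, the target point is $(6,9)$, and one computes $C_\Gamma(6,8.5)=-31.875$ but $C_\Gamma(6,9)=-30$, with $\partial_y C_\Gamma(6,9)=37>0$. So the cubic \emph{increases} across your interval. Indeed, your two endpoint values differ by $(1-\gamma)\bigl(\gamma(2k-\lambda-d)+(2s-r)(r+1)\bigr)$, which is positive whenever $\gamma\frac{s+1}{s}\bigl(k-s(r+1)\bigr)>(r+1)(r-2s)$ --- a condition met by $\overline{T(m)}$ for all odd $m\geqslant 17$, so this is an infinite family, not an edge case. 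The theorem survives in these cases only because the increase stays below the margin $(v-d)\gamma(1-\gamma)$, but proving \emph{that} is exactly the remaining content of the theorem, which your monotonicity mechanism cannot deliver.

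A second, related problem is your fallback of verifying the needed inequality ``by the symbolic computation described in the appendix'': the Gr\"{o}bner-basis computations there certify polynomial \emph{identities} modulo the ideal of parameter relations; they cannot certify an \emph{inequality} uniformly over infinitely many feasible tuples, so a hand sign-analysis is unavoidable. The paper sidesteps your obstruction by moving \emph{diagonally} rather than vertically: by Proposition~\ref{pro:fracPart} the target point is $(-\mu/s-t,\,2-k/s-t)$ with $t$ the common fractional part (type II), and Equation~\eqref{eqn:SRG-Pol} of Lemma~\ref{lem:CAP1} gives the factorised value $(t-1)\bigl((v-2k+\lambda)t-(2s-r)(r+1)\bigr)$, whose negativity for $t\in[0,1)$ is read off from Proposition~\ref{pro:SRGbound} and $r\geqslant 0>s$ (Corollary~\ref{cor:meetDelsarte1}, with disconnected and complete multipartite graphs handled separately); in type I the fractional parts of $-\mu/s$ and $-k/s$ are \emph{not} equal, and the paper uses the two separate identities of Lemma~\ref{lem:CAP2.0} with a case split at $t=1/2$ (Corollary~\ref{cor:meetDelsarte}). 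If you want to keep your vertical path, you must prove $(1-\gamma)\bigl(\gamma(2k-\lambda-d)+(2s-r)(r+1)\bigr)<(v-d)\gamma(1-\gamma)$ for $\gamma\in(0,1)$, plus the type-I analogue with $\beta=\frc{2\gamma}$; after simplification via Proposition~\ref{pro:SRG} this is precisely the inequality the diagonal identities hand you in factored form, so the repair essentially reproduces the paper's proof.
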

\begin{proof}
	Follows from Corollary~\ref{cor:meetDelsarte} and Corollary~\ref{cor:meetDelsarte1} below.
\end{proof}

Observe that, together with Theorem~\ref{thm:CliqueAdjPol}, Theorem~\ref{thm:CAB-Del} shows that the clique adjacency bound always does as well as the Delsarte bound for strongly regular graphs.

Now we can state our first polynomial identity, which shows that the clique adjacency polynomial is negative at a certain point.

\begin{lemma}\label{lem:CAP0}
	Let $\Gamma$ be a connected strongly regular graph with parameters $(v,k,\lambda,\mu)$ and eigenvalues $k > r > s$.
	Then
	\[
		C_\Gamma(-\mu/s, 2-k/s) = {(2s-r)(r+1)} < 0.
	\]
\end{lemma}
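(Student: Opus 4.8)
The plan is to evaluate $C_\Gamma$ at the prescribed point by direct substitution and then reduce the resulting rational expression to a polynomial in $r$ and $s$ using the identities of Proposition~\ref{pro:SRG}. Writing $x = -\mu/s$ and $y = 2 - k/s$ and substituting into $C_\Gamma(x,y) = x(x+1)(v-y) - 2xy(k-y+1) + y(y-1)(\lambda - y + 2)$, I would first put everything over the common denominator $s^3$, so that $s^3\,C_\Gamma(-\mu/s,\, 2-k/s)$ becomes a polynomial in $v,k,\lambda,\mu,s$. The point of these particular arguments is that $x+1 = (s-\mu)/s$, $y-1 = (s-k)/s$, $y-2 = -k/s$, and the shifted factors $v-y$, $k-y+1$, $\lambda-y+2$ each become a linear expression divided by $s$, which keeps the bookkeeping manageable.

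The only place $v$ enters is the first term, and it does so linearly, so I would eliminate it using the first identity of Proposition~\ref{pro:SRG}, namely $(v-k-1)\mu = k(k-\lambda-1)$, i.e. $\mu v = (k+1)\mu + k(k-\lambda-1)$. After this substitution the expression depends only on $k,\lambda,\mu,s$. I would then replace $\mu$ by $k + rs$ and $\lambda$ by $k + rs + r + s$, using $\mu - k = rs$ and $\lambda - \mu = r + s$. Expanding and collecting terms, the claim is that every monomial involving $k$ cancels and the remaining expression equals $s^3(2s-r)(r+1)$; dividing by $s^3$ then gives $C_\Gamma(-\mu/s,\, 2-k/s) = (2s-r)(r+1)$.

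I expect the main obstacle to be purely the algebraic bookkeeping: verifying that all of the $v$- and $k$-dependence really cancels. It is reassuring that the final answer depends only on $r$ and $s$, which makes the vanishing of the (a priori quadratic-in-$k$) terms the crucial check; this is exactly the kind of polynomial identity most safely confirmed by the symbolic computation described in the appendix, though it can be done by hand with care. As a sanity check one can test the Petersen graph, where $(v,k,\lambda,\mu) = (10,3,0,1)$ and $(r,s)=(1,-2)$ give $C_\Gamma(1/2,\, 7/2) = -10 = (2\cdot(-2)-1)(1+1)$.

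Finally, for the strict inequality I would determine the signs of the two factors. Since $\Gamma$ is connected and non-complete (every strongly regular graph is non-complete by definition), its least eigenvalue satisfies $s < 0$. From $\mu - k = rs$ and the elementary bound $\mu \leqslant k$ we get $rs \leqslant 0$, hence $r \geqslant 0$ and so $r + 1 > 0$; moreover $2s - r < 2s - s = s < 0$ because $r > s$. Therefore $(2s-r)(r+1) < 0$, which completes the proof.
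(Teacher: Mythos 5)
Your proposal is correct and follows essentially the same route as the paper: the equality is established by clearing the denominator $s^3$ and reducing via the relations of Proposition~\ref{pro:SRG} (which the paper carries out as the Gr\"{o}bner-basis normal-form computation in Appendix~\ref{sec:comp}, verifying $s^3(C_\Gamma(-\mu/s,2-k/s)-(2s-r)(r+1))=0$ modulo the ideal generated by those relations), and the negativity follows from the same sign analysis, $s<0$ and $r\geqslant 0$. Your elimination order ($v$ first via $\mu v=(k+1)\mu+k(k-\lambda-1)$, then $\mu=k+rs$ and $\lambda=\mu+r+s$) is exactly the by-hand version of that computation, and your derivation of $r\geqslant 0$ from $\mu\leqslant k$ supplies a small detail the paper leaves implicit.
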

\begin{proof}
	The equality follows from direct calculation (see Appendix~\ref{sec:comp}), using the equalities in Proposition~\ref{pro:SRG}.
	The right-hand side is negative since $s < 0$ and $r \geqslant 0$.
\end{proof}

Let $\Gamma$ be a strongly regular graph with parameters $(v,k,\lambda,\mu)$ such that both $\mu/s$ and $k/s$ are integers.
Then by Lemma~\ref{lem:CAP0}, together with Theorem~\ref{thm:CliqueAdjPol}, we recover the Delsarte bound, i.e., $\omega(\Gamma) \leqslant \left \lfloor 1-k/s \right \rfloor$.
It remains for us to deal with the situation when $k/s$ and $\mu/s$ are not integers.
In the remainder of this section, motivated by Lemma~\ref{lem:CAP0}, we consider integral points $(x,y) \in \Z^2$ close to $(-\mu/s, 2-k/s)$ such that $C_\Gamma(x,y)$ is negative. 
We deal with the type I and type II cases separately.

\subsection{Type-I strongly regular graphs} 
\label{sub:conference_graphs}

Let $\Gamma$ be a type-I strongly regular graph (or conference graph) with $v$ vertices.
By Proposition~\ref{pro:SRGtI} we have $-\mu/s = r$ and $-k/s = 2r$.
Therefore, we consider integral points $(x,y)$ close to $(r,2+2r)$ at which to evaluate the clique adjacency polynomial.
In view of Proposition~\ref{pro:fracPart}, we evaluate $C_\Gamma(x,y)$ at points of the form $(r-t,a+2r-2t)$ for some $a \in \mathbb N$, thinking of $t$ as the fractional part of $r$.

\begin{lemma}\label{lem:CAP2.0}
	Let $\Gamma$ be a type-I strongly regular graph with $v$ vertices and eigenvalues $k > r > s$.
	Then
	\begin{align}
		C_\Gamma(r - t, 3 + 2r - 2t) &= 2(t-1)(t+s-2)(t+2s); \label{eqn:confPol3} \\
		C_\Gamma(r - t, 2 + 2r - 2t) &= (t+s)(2t^2+(4s-1)t-3s-1). \label{eqn:confPol2}
	\end{align}
\end{lemma}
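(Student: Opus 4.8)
The plan is to verify both identities as polynomial identities in the single variable $t$ (with coefficients depending on the parameters), obtained by substituting explicit type-I parameter values into the definition of $C_\Gamma(x,y)$. Since everything in sight is a polynomial, once the substitution is carried out the two equalities reduce to routine algebraic expansions of exactly the kind already deferred to the appendix for Lemma~\ref{lem:CAP0}.

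First I would express the relevant parameters in terms of $s$ alone. By Proposition~\ref{pro:SRGtI} we have $r = (\sqrt v - 1)/2$ and $s = -(\sqrt v + 1)/2$, so $\sqrt v = -2s - 1$ and hence
\[
    v = (2s+1)^2, \qquad k = (v-1)/2 = 2s^2 + 2s, \qquad \lambda = (v-5)/4 = s^2 + s - 1,
\]
together with the convenient relation $r = -s - 1$ (equivalently $r + s = -1$, which also follows from $\lambda - \mu = r + s$ in Proposition~\ref{pro:SRG}). With these in hand I would substitute $x = r - t = -s - 1 - t$ together with $y = 3 + 2r - 2t = 1 - 2s - 2t$ for the first identity, and $y = 2 + 2r - 2t = -2s - 2t$ for the second, into
\[
    C_\Gamma(x,y) = x(x+1)(v-y) - 2xy(k-y+1) + y(y-1)(\lambda-y+2),
\]
and expand. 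Each substitution yields a cubic polynomial in $t$ whose coefficients are polynomials in $s$; collecting terms and comparing with the claimed factored right-hand sides establishes the identities.

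The main obstacle is simply the bookkeeping in these expansions: after substitution each side is a degree-three polynomial in $t$ over $\mathbb Z[s]$, and matching the expanded form to the factored product $2(t-1)(t+s-2)(t+2s)$ (respectively $(t+s)(2t^2 + (4s-1)t - 3s - 1)$) requires care. To guard against error I would cross-check at special values. Setting $t = 0$ in the second identity gives $s(-3s-1) = -3s^2 - s$, which agrees with $C_\Gamma(r, 2+2r) = (2s-r)(r+1)$ from Lemma~\ref{lem:CAP0} upon substituting $r = -s-1$; similarly one confirms that $t = 1$, $t = 2 - s$, and $t = -2s$ are roots of the first expression and $t = -s$ a root of the second. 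These are precisely the verifications carried out by the symbolic computation described in Appendix~\ref{sec:comp}.
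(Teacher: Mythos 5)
Your proposal is correct and takes essentially the same approach as the paper: the paper's proof is a direct calculation using Proposition~\ref{pro:SRG} and the type-I relations, verified symbolically in Appendix~\ref{sec:comp}, and your explicit elimination $v=(2s+1)^2$, $k=2s^2+2s$, $\lambda=s^2+s-1$, $r=-s-1$ is precisely the reduction encoded by the Gr\"{o}bner basis $H=[r+1+s,\lambda+1-\mu,k-2\mu,v-1-4\mu,s^2-\mu+s]$ there. The only minor inaccuracy is your closing claim that the spot-checks at the roots are ``precisely'' the appendix's verifications --- the appendix instead reduces the full polynomial differences to normal form $0$ --- but this does not affect the validity of your expansion argument.
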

\begin{proof}
	The equalities follow from direct calculation (see Appendix~\ref{sec:comp}), applying Proposition~\ref{pro:SRG} and the definition of a type-I strongly regular graph.
\end{proof}
%
%
The right-hand side of Equation~\eqref{eqn:confPol2} is a cubic polynomial in the indeterminate $t$ with positive leading coefficient.
Furthermore, since for a type-I strongly regular graph we have $s = -(\sqrt{v}+1)/2$, we observe that the smallest zero of the right-hand side of Equation~\eqref{eqn:confPol2} is equal to $3/4 + ( \sqrt{v}-\sqrt{v+5/4} )/2$.
Hence $C_\Gamma(r - t, 2 + 2r - 2t)$ is negative for $t < 3/4 + ( \sqrt{v}-\sqrt{v+5/4} )/2$.
We use this observation in the next result, which can be used with Theorem~\ref{thm:CliqueAdjPol} to obtain the Delsarte bound for conference graphs.

\begin{corollary}\label{cor:meetDelsarte}
	Let $\Gamma$ be a type-I strongly regular graph with $v$ vertices.
	Then $$
	C_\Gamma \left ( \left \lfloor (\sqrt{v}-1)/2  \right \rfloor, \left \lfloor \sqrt{v} + 1 \right \rfloor \right ) < 0.
	$$
\end{corollary}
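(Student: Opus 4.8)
The plan is to realise the evaluation point in the statement as a specialisation of the two polynomial identities of Lemma~\ref{lem:CAP2.0}. Put $t := \frc{r}$, the fractional part of $r = (\sqrt v - 1)/2$; then $r - t = \lfloor r\rfloor = \lfloor(\sqrt v - 1)/2\rfloor$, which already matches the first argument of $C_\Gamma$ in the corollary. For the second argument, since $\sqrt v + 1 = 2r + 2$ I would write $\lfloor\sqrt v + 1\rfloor = 2 + \lfloor 2r\rfloor$ and note that $\lfloor 2r\rfloor$ equals $2\lfloor r\rfloor$ when $\frc{r} < 1/2$ and $2\lfloor r\rfloor + 1$ when $\frc{r} \geqslant 1/2$. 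This dichotomy is precisely what distinguishes the second arguments $2 + 2r - 2t$ and $3 + 2r - 2t$ of the two identities, so I would split the proof accordingly.

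When $0 \leqslant t < 1/2$ we have $\lfloor\sqrt v + 1\rfloor = 2 + 2\lfloor r\rfloor = 2 + 2r - 2t$, so the quantity to be bounded is the left-hand side of \eqref{eqn:confPol2} at $t = \frc{r}$. I would then quote the observation made immediately before the corollary: the right-hand side of \eqref{eqn:confPol2} is a cubic in $t$ with positive leading coefficient whose smallest root is $3/4 + (\sqrt v - \sqrt{v+5/4})/2$, so it is negative for every $t$ below this root. It remains to check that this root exceeds $1/2$, equivalently $\sqrt{v+5/4} - \sqrt v < 1/2$; writing $\sqrt{v+5/4} - \sqrt v = (5/4)/(\sqrt{v+5/4}+\sqrt v)$ and using $v \geqslant 5$ gives this at once. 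Since $t < 1/2$, the value is negative.

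When $1/2 \leqslant t < 1$ we instead have $\lfloor\sqrt v + 1\rfloor = 3 + 2\lfloor r\rfloor = 3 + 2r - 2t$, so the quantity equals the left-hand side of \eqref{eqn:confPol3} at $t = \frc{r}$, namely $2(t-1)(t+s-2)(t+2s)$. Here I would just read off the sign of each factor: from $s = -(\sqrt v + 1)/2 < -1$ and $t < 1$ we get $t - 1 < 0$, $t + s - 2 < 0$ and $t + 2s < 0$, so the product of the three linear factors is negative and the whole expression is negative.

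I expect no real difficulty in the sign computations themselves; the one point needing care is the bookkeeping in the first paragraph, where one must correctly pair each of the two possible values of $\lfloor\sqrt v + 1\rfloor$ with the matching identity of Lemma~\ref{lem:CAP2.0} through the fractional part of $r$, and check that the arguments $r - t$ and $2 + 2r - 2t$ (respectively $3 + 2r - 2t$) are genuinely integers. Once that correspondence is in place, each case collapses to a one-line sign check.
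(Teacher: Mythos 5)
Your proof is correct and follows essentially the same route as the paper: split on whether $\frc{r}$ is below or at least $1/2$, match the floor $\lfloor \sqrt{v}+1\rfloor$ to the appropriate identity of Lemma~\ref{lem:CAP2.0}, and read off the sign. The only differences are cosmetic refinements — the paper excludes $t=1/2$ by noting $r$ is an algebraic integer, whereas you absorb it into the second case (which works), and you verify explicitly the inequality $1/2 < 3/4 + (\sqrt{v}-\sqrt{v+5/4})/2$ and the factorwise signs that the paper merely asserts.
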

\begin{proof}
	Let $t = \frc{r}$.
	If $t > 1/2$ then 
	\[
		C_\Gamma \left ( \left \lfloor (\sqrt{v}-1)/2  \right \rfloor, \left \lfloor \sqrt{v} + 1 \right \rfloor \right ) = C_\Gamma(r - t, 3 + 2r - 2t)
	\]
	and the right-hand side of Equation~\eqref{eqn:confPol3} is negative for $t < 1$.
	Otherwise, if $t < 1/2$ then
	\[
		C_\Gamma \left ( \left \lfloor (\sqrt{v}-1)/2  \right \rfloor, \left \lfloor \sqrt{v} + 1 \right \rfloor \right ) = C_\Gamma(r - t, 2 + 2r - 2t),
	\]
	which is negative since $t < 1/2 < 3/4 + ( \sqrt{v}-\sqrt{v+5/4} )/2$.
	Note that $t$ cannot be equal to $1/2$ since $r$ is an algebraic integer.
\end{proof}

The next corollary follows in a similar fashion.

\begin{corollary}\label{cor:beatDelsarte}
	Let $\Gamma$ be a type-I strongly regular graph with $v$ vertices.
	Suppose that 
	\[
		0 < \frc{\sqrt{v}/2} < 1/4 + ( \sqrt{v}-\sqrt{v+5/4} )/2.
	\]
	Then $C_\Gamma \left ( \left \lfloor (\sqrt{v}-1)/2 \right \rfloor, \left \lfloor \sqrt{v} \right \rfloor \right) < 0$.
\end{corollary}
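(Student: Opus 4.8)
The plan is to mimic the structure of the proof of Corollary~\ref{cor:meetDelsarte}, but now aiming to show negativity at the point $\left(\lfloor(\sqrt v-1)/2\rfloor,\lfloor\sqrt v\rfloor\right)$ rather than at $\left(\lfloor(\sqrt v-1)/2\rfloor,\lfloor\sqrt v+1\rfloor\right)$. Recall that for a type-I graph $-\mu/s = r = (\sqrt v-1)/2$ and $-k/s = 2r$, so the natural ``Delsarte'' point is near $(r,2+2r)$; here we are shaving one off the second coordinate to reach the strictly better bound $\lfloor\sqrt v-1\rfloor = \lfloor\sqrt v\rfloor-1$. So the first step is to rewrite the target point in terms of $t := \frc{r}$ and relate it to the parametrised family $(r-t, a+2r-2t)$ already appearing in Lemma~\ref{lem:CAP2.0}.

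First I would pin down the two coordinates explicitly as functions of $t=\frc{r}$. The first coordinate is immediate: $\lfloor(\sqrt v-1)/2\rfloor = \lfloor r\rfloor = r-t$. For the second coordinate I would compute $\lfloor\sqrt v\rfloor$ in terms of $t$: since $\sqrt v = 2r+1$, we have $\lfloor\sqrt v\rfloor = \lfloor 2r+1\rfloor = 1 + \lfloor 2r\rfloor$, and $\lfloor 2r\rfloor$ is either $2\lfloor r\rfloor$ or $2\lfloor r\rfloor+1$ according to whether $t<1/2$ or $t>1/2$. The hypothesis $0 < \frc{\sqrt v/2} < 1/4 + (\sqrt v-\sqrt{v+5/4})/2$ is precisely what controls this dichotomy: since $\sqrt v/2 = r + 1/2 = \lfloor r\rfloor + t + 1/2$, one sees $\frc{\sqrt v/2} = t+1/2$ when $t<1/2$ and $\frc{\sqrt v/2} = t-1/2$ when $t>1/2$. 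The upper bound in the hypothesis is small (the correction term tends to $0$), forcing us into the regime $t-1/2$, i.e.\ $t>1/2$, with $t$ close to $1/2$ from above; explicitly $\frc{\sqrt v/2}=t-1/2$ and the hypothesis becomes $1/2 < t < 3/4 + (\sqrt v-\sqrt{v+5/4})/2$.

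Given $t>1/2$, we get $\lfloor 2r\rfloor = 2\lfloor r\rfloor+1$, hence $\lfloor\sqrt v\rfloor = 2+2\lfloor r\rfloor = 2 + 2(r-t)$, so the target point is exactly $(r-t, 2+2r-2t)$. Therefore $C_\Gamma(\lfloor(\sqrt v-1)/2\rfloor,\lfloor\sqrt v\rfloor) = C_\Gamma(r-t,2+2r-2t)$, which by Equation~\eqref{eqn:confPol2} equals $(t+s)(2t^2+(4s-1)t-3s-1)$. The factor $t+s$ is negative (as $t<1$ and $s\leqslant -1$), so it remains to show the quadratic factor is positive. As noted in the text, the cubic on the right of \eqref{eqn:confPol2} has smallest root $3/4 + (\sqrt v-\sqrt{v+5/4})/2$ and positive leading coefficient, so $C_\Gamma(r-t,2+2r-2t)<0$ exactly for $t < 3/4 + (\sqrt v-\sqrt{v+5/4})/2$. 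That strict inequality is guaranteed by the upper half of our hypothesis after translating $\frc{\sqrt v/2}=t-1/2$ into $t < 3/4 + (\sqrt v-\sqrt{v+5/4})/2$, which closes the argument.

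The main obstacle I anticipate is purely the bookkeeping of fractional parts: correctly establishing that the hypothesis forces $t>1/2$ (equivalently $\frc{\sqrt v/2}=t-1/2$ rather than $t+1/2$) and not the other branch. The resolution is to observe that the right-hand bound $1/4+(\sqrt v-\sqrt{v+5/4})/2$ is strictly less than $1/4$, so $\frc{\sqrt v/2} < 1/4$ rules out the branch $\frc{\sqrt v/2}=t+1/2 \geqslant 1/2$, leaving only $t>1/2$. Once that branch is fixed, everything reduces to reading off the sign of the already-factored polynomial from \eqref{eqn:confPol2}, exactly as in Corollary~\ref{cor:meetDelsarte}; the irrationality of $r$ again ensures the boundary case $t=1/2$ does not occur.
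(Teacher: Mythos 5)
Your proposal is correct and takes essentially the same route as the paper's proof: set $t = \frc{r}$, use the hypothesis to conclude $1/2 < t < 3/4 + (\sqrt{v}-\sqrt{v+5/4})/2$, identify the evaluation point with $(r-t,\,2+2r-2t)$, and read off negativity from Equation~\eqref{eqn:confPol2}. The only difference is that you spell out the fractional-part bookkeeping (which branch of $\frc{\sqrt{v}/2}$ occurs, and why $t=1/2$ is excluded) that the paper leaves implicit, which is a harmless elaboration.
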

\begin{proof}
	Let $t = \frc{r} = \frc{(\sqrt{v}-1)/2}$.
	Then by our hypothesis $1/2 < t < 3/4 + ( \sqrt{v}-\sqrt{v+5/4} )/2$.
	Therefore we have
	\[
		C_\Gamma \left ( \left \lfloor (\sqrt{v}-1)/2  \right \rfloor, \left \lfloor \sqrt{v} \right \rfloor \right ) = C_\Gamma(r - t, 2 + 2r - 2t),
	\]
	which is negative since $t < 3/4 + ( \sqrt{v}-\sqrt{v+5/4} )/2$.
\end{proof}


\subsection{Type-II strongly regular graphs} 
\label{sub:non_conference_strongly_regular_graph}

Let $\Gamma$ be a type-II strongly regular graph with parameters $(v,k,\lambda,\mu)$.
Again, in view of Proposition~\ref{pro:fracPart}, we evaluate $C_\Gamma(x,y)$ at points of the form $(-\mu/s - t,a -k/s - t)$ for some $a \in \mathbb Z$, thinking of $t$ as the fractional part of $-\mu/s$.

\begin{lemma}\label{lem:CAP1}
	Let $\Gamma$ be a strongly regular graph with parameters $(v,k,\lambda,\mu)$ and eigenvalues $k > r \geqslant s$.
	Then
	\begin{align}
		C_\Gamma(-\mu/s - t, 2-k/s - t) &= (t-1)((v - 2k + \lambda)t - (2s-r)(r+1)); \label{eqn:SRG-Pol} \\
		C_\Gamma(-\mu/s - t, 1-k/s - t) &= t( (v - 2k + \lambda)(t-1) + r(r+1)). \label{eqn:SRGPol}
	\end{align}
	Moreover, if $\Gamma$ is co-connected then these polynomials have positive leading coefficients.
\end{lemma}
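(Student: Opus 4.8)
The plan is to treat both displayed formulas as polynomial identities in the single variable $t$, verify them by direct substitution and expansion, and then read off the leading coefficients to settle the final sentence via Proposition~\ref{pro:SRGbound}. First I would substitute $x = -\mu/s - t$ together with $y = 2 - k/s - t$ (respectively $y = 1 - k/s - t$) into
\[
	C_\Gamma(x,y) = x(x+1)(v-y) - 2xy(k-y+1) + y(y-1)(\lambda-y+2)
\]
and expand as a polynomial in $t$. A useful preliminary observation is that the coefficient of $t^3$ vanishes identically: since both $x$ and $y$ have $t$-coefficient $-1$, the three summands contribute $+1$, $-2$, and $+1$ to the $t^3$ coefficient, which sum to zero. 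Hence the left-hand side is genuinely a quadratic in $t$, matching the degree of each claimed right-hand side.

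It then remains to match the coefficients of $t^2$, $t^1$, and $t^0$. To do this I would use the relations of Proposition~\ref{pro:SRG}, eliminating $\mu$ via $\mu - k = rs$ and $\lambda$ via $\lambda - \mu = r+s$, while deliberately keeping the combination $v - 2k + \lambda$ intact as a single atom, since it appears verbatim in the target expressions; the remaining relation $(v-k-1)\mu = k(k-\lambda-1)$ is what lets the leftover occurrences of $v$ and $k$ be absorbed. As a built-in consistency check, setting $t = 0$ in the first identity must reproduce Lemma~\ref{lem:CAP0}, and indeed $(t-1)\bigl((v-2k+\lambda)t - (2s-r)(r+1)\bigr)$ evaluated at $t=0$ gives $(2s-r)(r+1)$, exactly $C_\Gamma(-\mu/s,2-k/s)$; similarly the factor of $t$ in the second identity predicts $C_\Gamma(-\mu/s,1-k/s)=0$, which the expansion must confirm.

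For the final sentence I would note that, because the $t^2$-coefficient is unaffected by the constant parts $-\mu/s$ and $2-k/s$ (or $1-k/s$) of $x$ and $y$, a short computation shows this coefficient equals $v - 2k + \lambda$ in both identities (this is also visible directly by expanding the two right-hand sides). By Proposition~\ref{pro:SRGbound}(i) we have $v - 2k + \lambda \geqslant 0$, with equality precisely when $\Gamma$ is complete multipartite. A strongly regular graph that is complete multipartite necessarily has at least two parts, so its complement is a disjoint union of at least two cliques and is therefore disconnected; such a $\Gamma$ is not co-connected. Consequently, when $\Gamma$ is co-connected it is not complete multipartite, so $v - 2k + \lambda > 0$ strictly, and both polynomials have positive leading coefficient.

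The hard part will be the algebraic bookkeeping in the two expansions, specifically checking that after eliminating $\mu$ and $\lambda$ the $t^2$ and $t^1$ coefficients collapse exactly to the compact forms on the right, rather than merely to equal-valued expressions. This step is mechanical but error-prone, so I would carry it out (and confirm it) symbolically, as is done in Appendix~\ref{sec:comp} for the companion identities of Lemmas~\ref{lem:CAP0} and~\ref{lem:CAP2.0}. The ``moreover'' clause, by contrast, is immediate once the leading coefficient has been identified as $v - 2k + \lambda$.
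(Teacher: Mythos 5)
Your proposal is correct and matches the paper's proof in essentials: the paper also establishes both identities by direct symbolic calculation modulo the relations of Proposition~\ref{pro:SRG} (verified via Gr\"{o}bner bases in Appendix~\ref{sec:comp}) and derives the positivity of the leading coefficient $v-2k+\lambda$ for co-connected $\Gamma$ from Proposition~\ref{pro:SRGbound}. Your additional observations --- the vanishing $t^3$-coefficient, the constant-part-independence of the $t^2$-coefficient, and the $t=0$ consistency check against Lemma~\ref{lem:CAP0} --- are sound refinements of the same route rather than a different argument.
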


\begin{proof}
	The equalities follow from direct calculation (see Appendix~\ref{sec:comp}), using the equalities in Proposition~\ref{pro:SRG}.
	By Proposition~\ref{pro:SRGbound} if $\Gamma$ is co-connected then the polynomials have positive leading coefficients.
\end{proof}

Note that Lemma~\ref{lem:CAP0} is a special case of Lemma~\ref{lem:CAP1}, for $t=0$.

\begin{corollary}\label{cor:meetDelsarte1}
	Let $\Gamma$ be a type-II strongly regular graph with parameters $(v,k,\lambda,\mu)$ and eigenvalues $k > r \geqslant s$.
	Then $$
	C_\Gamma(\lfloor -\mu/s \rfloor, \lfloor 2-k/s \rfloor) < 0.
	$$
\end{corollary}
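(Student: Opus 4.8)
The plan is to reduce everything to the polynomial identity of Lemma~\ref{lem:CAP1}, Equation~\eqref{eqn:SRG-Pol}, by choosing the shift $t$ so that the evaluation point $(-\mu/s - t,\, 2-k/s-t)$ lands exactly on the integer point $(\lfloor -\mu/s \rfloor,\, \lfloor 2-k/s \rfloor)$. First I would set $t := \frc{-\mu/s}$, so that $-\mu/s - t = \lfloor -\mu/s \rfloor$ by definition of the fractional part, and record that $t \in [0,1)$.

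Next I would check that the second coordinate also lands correctly, i.e.\ that $\frc{-k/s} = t$. From Proposition~\ref{pro:SRG} we have $\mu - k = rs$, and dividing by $s$ gives $-k/s = -\mu/s + r$. Since $\Gamma$ is type II the eigenvalue $r$ is an integer, so $\frc{-k/s} = \frc{-\mu/s} = t$, whence $2 - k/s - t = 2 + \lfloor -k/s \rfloor = \lfloor 2 - k/s \rfloor$ (this integrality is exactly what is recorded in Proposition~\ref{pro:fracPart}(ii)). Consequently
\[
	C_\Gamma(\lfloor -\mu/s \rfloor, \lfloor 2-k/s \rfloor) = C_\Gamma(-\mu/s - t,\, 2 - k/s - t) = (t-1)\bigl((v-2k+\lambda)t - (2s-r)(r+1)\bigr),
\]
the last equality being Equation~\eqref{eqn:SRG-Pol}.

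Finally I would read off the sign. Since $t \in [0,1)$ we have $t - 1 < 0$. For the second factor, Proposition~\ref{pro:SRGbound}(i) gives $v - 2k + \lambda \geqslant 0$ and $t \geqslant 0$, so $(v-2k+\lambda)t \geqslant 0$; moreover $s < 0$ and $r \geqslant 0$ give $2s - r < 0$ and $r+1 > 0$, so $-(2s-r)(r+1) > 0$. Hence the second factor is strictly positive, and the product of a negative quantity with a positive quantity is negative, which is the desired conclusion.

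Because the substantive computation is already packaged in Lemma~\ref{lem:CAP1}, this corollary is a short deduction with no serious obstacle; the only point requiring care is the fractional-part bookkeeping that forces both coordinates of the evaluation point to be simultaneously integral, and that is precisely where the type-II hypothesis (integrality of $r$) is used. I would also note that, in contrast to the leading-coefficient assertion of Lemma~\ref{lem:CAP1}, this argument does \emph{not} require co-connectedness: we only invoke $v - 2k + \lambda \geqslant 0$, not strict positivity, which is why the statement of the corollary carries no co-connected hypothesis.
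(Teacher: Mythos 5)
Your proof is correct and follows the same route as the paper's: set $t = \frc{-\mu/s}$, use the integrality of $r$ (equivalently, Proposition~\ref{pro:fracPart}(ii)) to land both coordinates on the integer point, and reduce to Equation~\eqref{eqn:SRG-Pol}. You diverge only at the endgame, and there your version is a genuine (if small) simplification: the paper argues via the root structure of the quadratic in $t$, splitting into the co-connected case (where $v-2k+\lambda > 0$ and the right-hand side of \eqref{eqn:SRG-Pol} is negative precisely on $(\eta,1)$ with $\eta = (2s-r)(r+1)/(v-2k+\lambda) < 0$) and the complete multipartite case (where $v-2k+\lambda = 0$ and one checks $t=0$), whereas your term-by-term sign analysis --- $t-1<0$, while $(v-2k+\lambda)t \geqslant 0$ and $-(2s-r)(r+1) > 0$ make the second factor strictly positive --- handles both cases uniformly, and your closing remark correctly identifies why no co-connectedness hypothesis is needed. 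The one thing the paper does that you omit: it first disposes of disconnected type-II graphs directly, computing $C_\Gamma(0,\lambda+3) = -(\lambda+3)(\lambda+2) < 0$ from $\mu = 0$, before assuming connectivity. For a disjoint union of complete graphs the restricted eigenvalues are $r=k$ and $s=-1$, so the standing convention $k > r$ degenerates, and Lemma~\ref{lem:CAP1}, which you cite, carries the same eigenvalue hypothesis as the corollary. Your sign argument would in fact still go through in that degenerate case (the identity \eqref{eqn:SRG-Pol} depends only on the relations of Proposition~\ref{pro:SRG}, which hold with $r=k$, $s=-1$, and then $t=0$, $r \geqslant 0$, $v-2k+\lambda \geqslant 0$ all hold), but since the paper evidently intends the corollary to feed Theorem~\ref{thm:CAB-Del} for all strongly regular graphs, you should either note that disconnected graphs are excluded by the stated eigenvalue hypothesis or add the paper's one-line computation for them.
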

\begin{proof}
	If $\Gamma$ is disconnected then we have $\mu = 0$ and $2-k/s = \lambda + 3$.
	Whence $C_\Gamma(\lfloor -\mu/s \rfloor, \lfloor 2-k/s \rfloor) = C_\Gamma(0, \lambda+3) = -(\lambda + 3)(\lambda + 2) < 0$, as required.
	Hence we can assume that $\Gamma$ is connected.
	
	Let $t = \frc{-\mu/s}$.
	Then, using Proposition~\ref{pro:fracPart} and Equation~\eqref{eqn:SRG-Pol}, we have 
	\[
		C_\Gamma(\lfloor -\mu/s \rfloor, \lfloor 2-k/s \rfloor) = (t-1)((v - 2k + \lambda)t - (2s-r)(r+1)).
	\]
	Suppose first that $\Gamma$ is co-connected.
	The right-hand side of Equation~\eqref{eqn:SRG-Pol} is negative on the open interval $(\eta,1)$, where $\eta = (2s-r)(r+1)/(v - 2k + \lambda)$ is negative.
	Hence the corollary holds for $\Gamma$.
	On the other hand, for complete multipartite graphs we have $t = 0$, in which case the right-hand side of Equation~\eqref{eqn:SRG-Pol} is negative.
\end{proof}

The next corollary follows similarly, using the fact that the right-hand side of Equation~\eqref{eqn:SRGPol} is negative on the open interval $(0,\eta)$, where $\eta = 1-(r^2 + r)/(v - 2k + \lambda)$.

\begin{corollary}\label{cor:beatDel01}
	Let $\Gamma$ be a co-connected type-II strongly regular graph with parameters $(v,k,\lambda,\mu)$ and eigenvalues $k > r \geqslant s$.
	Suppose that 
	\[
		0< \frc{-k/s} < 1-(r^2 + r)/(v - 2k + \lambda).
	\]
	Then $C_\Gamma(\lfloor -\mu/s \rfloor, \lfloor 1-k/s \rfloor) < 0$.
\end{corollary}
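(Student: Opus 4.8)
The plan is to mirror the argument of Corollary~\ref{cor:beatDelsarte}, but now working with Equation~\eqref{eqn:SRGPol} in place of \eqref{eqn:confPol2} and evaluating at the point $(-\mu/s - t,\, 1-k/s - t)$. The key preliminary observation is that a single value of $t$ simultaneously rounds \emph{both} coordinates down to the integers appearing in the statement. First I would invoke Proposition~\ref{pro:fracPart}(ii): since $\Gamma$ is type II, $k/s - \mu/s = -r$ is an integer, so $-k/s$ and $-\mu/s$ differ by the integer $r$, whence $\frc{-k/s} = \frc{-\mu/s}$. Setting $t := \frc{-k/s} = \frc{-\mu/s}$, it then follows directly from the definition of the fractional part that $-\mu/s - t = \lfloor -\mu/s \rfloor$ and $1 - k/s - t = 1 + \lfloor -k/s \rfloor = \lfloor 1 - k/s \rfloor$.

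With this identification in hand, Equation~\eqref{eqn:SRGPol} of Lemma~\ref{lem:CAP1} immediately yields
\[
	C_\Gamma(\lfloor -\mu/s \rfloor,\, \lfloor 1-k/s \rfloor) = t\big((v-2k+\lambda)(t-1) + r(r+1)\big),
\]
so the task reduces to showing that the quadratic $g(t) := t\big((v-2k+\lambda)(t-1) + r(r+1)\big)$ is negative at $t = \frc{-k/s}$. I would then read off the two roots of $g$, namely $t = 0$ and $t = \eta := 1 - (r^2+r)/(v-2k+\lambda)$. Because $\Gamma$ is co-connected it is not complete multipartite, so Proposition~\ref{pro:SRGbound}(i) gives $v-2k+\lambda > 0$; this is exactly the positivity of the leading coefficient already recorded in Lemma~\ref{lem:CAP1}. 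Hence $g$ opens upward and is strictly negative on the open interval $(0,\eta)$. The hypothesis $0 < \frc{-k/s} < 1-(r^2+r)/(v-2k+\lambda)$ places $t$ precisely in this interval, so $g(t) < 0$ and the corollary follows.

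The only genuinely delicate point is the equality $\frc{-k/s} = \frc{-\mu/s}$, which is what allows one value of $t$ to do double duty in both arguments of $C_\Gamma$; this is precisely where the type-II hypothesis (through the integrality of $r$ in Proposition~\ref{pro:fracPart}) enters. Everything else is routine: the algebraic identity is supplied by Lemma~\ref{lem:CAP1}, and determining the sign of a quadratic strictly between its two explicit roots requires only that the hypothesis be interpreted as the inclusion $t \in (0,\eta)$, with the strict lower bound $0 < \frc{-k/s}$ ensuring that $t$ avoids the root at $0$.
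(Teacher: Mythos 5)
Your proposal is correct and follows exactly the route the paper intends: the paper disposes of this corollary in one line, saying it ``follows similarly'' to Corollary~\ref{cor:meetDelsarte1} using the fact that the right-hand side of Equation~\eqref{eqn:SRGPol} is negative on $(0,\eta)$ with $\eta = 1-(r^2+r)/(v-2k+\lambda)$, which is precisely your argument. Your explicit verification that $\frc{-k/s}=\frc{-\mu/s}$ (via $(\mu-k)/s=r\in\Z$ for type~II) and that the strict inequality $0<\frc{-k/s}$ keeps $t$ off the root at $0$ simply fills in the details the paper leaves implicit.
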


\begin{remark}
	\label{rem:4.10}
	We remark that if a type-II strongly regular graph satisfies the hypothesis of Corollary~\ref{cor:beatDel01} then its complement cannot also satisfy the hypothesis.
	Indeed, suppose that $\Gamma$ satisfies the hypothesis of Corollary~\ref{cor:beatDel01}.
	Since $\frc{-k/s} > 0$ we have that $s \ne -1$ and hence $\Gamma$ is connected.
	Then, using Proposition~\ref{pro:comp}, we see that the complement of $\Gamma$ also satisfies the hypothesis of Corollary~\ref{cor:beatDel01} if
	\[
		0 < \frc{(v-k-1)/(r+1)} < 1-(s^2 +s)/\mu.
	\]
	In particular, for the corollary to hold for both $\Gamma$ and its complement, we must have both $(r^2 + r)/(v - 2k + \lambda) < 1$ and $(s^2 + s)/\mu < 1$.
	But we find that $(r^2 + r)/(v - 2k + \lambda) < 1$ if and only if $(s^2 + s)/\mu > 1$.
	One can see this by using the equalities in Proposition~\ref{pro:SRG} (see Appendix~\ref{sec:comp}) to obtain the equality 
	\[
		\mu(v-2k+\lambda) = (r^2+r)(s^2+s).
	\]
\end{remark}



\section{How sharp is the clique adjacency bound?} 
\label{sec:a_limitation_of_the_bound}

In this section we show that the clique adjacency bound is sharp for strongly regular graphs in certain instances.
We also comment on the sharpness of the clique adjacency bound for general strongly regular graphs.

\begin{theorem}\label{thm:limitation}
	Let $\Gamma$ be a strongly regular graph with parameters $(v,k,\lambda,\mu)$ and eigenvalues $k > r \geqslant s$.
	Suppose that $\lambda + 1 \leqslant -k/s$.
	Then the clique adjacency bound is equal to $\lambda + 2$.
\end{theorem}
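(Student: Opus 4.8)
The plan is to prove the two inequalities $\mathrm{CAB}\leqslant \lambda+2$ and $\mathrm{CAB}\geqslant \lambda+2$ separately. The upper bound needs no hypothesis: since
\[
	C_\Gamma(0,\lambda+3)=(\lambda+3)(\lambda+2)\bigl(\lambda-(\lambda+3)+2\bigr)=-(\lambda+3)(\lambda+2)<0
\]
and $\lambda+2\geqslant 2$, the defining property of the clique adjacency bound gives $\mathrm{CAB}\leqslant\lambda+2$. The substance of the theorem is therefore the lower bound, namely that for every integer $y$ with $3\leqslant y\leqslant \lambda+2$ and every $b\in\Z$ one has $C_\Gamma(b,y)\geqslant 0$; this is precisely the statement $\mathrm{CAB}>\lambda+1$. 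I will assume $\lambda\geqslant 1$, since for $\lambda=0$ the range of $y$ is empty and the lower bound is vacuous.

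Fix such a $y$ and regard $C_\Gamma(x,y)$ as a quadratic in $x$. Its leading coefficient is $v-y>0$ (note $y\leqslant\lambda+2\leqslant k+1<v$, using Proposition~\ref{pro:SRGbound}(ii)), so it is an upward parabola with real minimiser at the vertex $x^\ast=\bar a-\tfrac12$, where $\bar a:=\tfrac{y(k-y+1)}{v-y}\geqslant 0$. The key claim is that $\bar a\leqslant 1$, so that $x^\ast\in[-\tfrac12,\tfrac12]$. Granting this, $b=0$ is an integer closest to the vertex and hence $\min_{b\in\Z}C_\Gamma(b,y)=C_\Gamma(0,y)$ (ties at $x^\ast=\tfrac12$ are harmless by the symmetry of the parabola). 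Since $C_\Gamma(0,y)=y(y-1)(\lambda-y+2)\geqslant 0$ for $y\leqslant\lambda+2$, we obtain $C_\Gamma(b,y)\geqslant 0$ for all $b\in\Z$, which finishes the lower bound.

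It remains to prove the claim $\bar a\leqslant 1$, equivalently $g(y):=y(k-y+2)\leqslant v$, for all integers $y\in[3,\lambda+2]$, and I expect the boundary case to be the main obstacle. The function $g$ is concave with maximum at $y=(k+2)/2$, and the hypothesis $\lambda+1\leqslant -k/s$ reads $k\geqslant -s(\lambda+1)$; when $s\leqslant -2$ this forces $k\geqslant 2(\lambda+1)$, so $(k+2)/2\geqslant\lambda+2$ and $g$ is increasing throughout $[3,\lambda+2]$. Thus it suffices to check $g(\lambda+2)=(\lambda+2)(k-\lambda)\leqslant v$. Rewriting everything via Proposition~\ref{pro:SRG} in terms of $r,s,\mu$ (so that $k-\lambda-1=-(r+1)(s+1)$) and invoking the identity $\mu(v-2k+\lambda)=(r^2+r)(s^2+s)$ from Remark~\ref{rem:4.10}, a direct computation reduces $(\lambda+2)(k-\lambda)\leqslant v$ to $(\mu+r)(\mu+s)\leqslant 0$, i.e.\ to $\mu\leqslant -s$; the same substitution shows that the hypothesis $k\geqslant -s(\lambda+1)$ is itself equivalent to $\mu\leqslant -s$. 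Hence $g(\lambda+2)\leqslant v$ holds exactly under our hypothesis. Finally the degenerate case $s=-1$ must be treated directly: then $k-\lambda-1=0$, so by Proposition~\ref{pro:SRGbound}(ii) the graph $\Gamma$ is a disjoint union of copies of $K_{\lambda+2}$, which contains cliques of every size up to $\lambda+2$; for such $y$ the inequality $C_\Gamma(b,y)\geqslant 0$ then follows at once from Theorem~\ref{thm:CliqueAdjPol}, completing the proof.
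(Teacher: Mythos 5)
Your proposal is correct, but it takes a genuinely different route from the paper. The paper first proves a reduction lemma (Lemma~\ref{lem:NoOtherZeros}): via the identity $C_\Gamma(b,c)-C_\Gamma(b,\lambda+2) = (\lambda+2-c)(b-c)(b-c+1) + 2b(\lambda+2-c)(k-\lambda-1)$, it suffices to check nonnegativity only at $y=\lambda+2$. It then analyses the single quadratic $f(x)=C_\Gamma(x,\lambda+2)$, which has $f(0)=0$, and shows $f(-1),f(1)\geqslant 0$; the heart is Lemma~\ref{lem:lam1le}, proving $f(1)\geqslant 0$ via the identity $\tfrac{\mu}{2}C_\Gamma(1,\lambda+2)=k(k-(\mu+1)(\lambda+1))+\mu(\lambda+1)^2$, and that argument uses the \emph{integrality} of $r$ and $s$ (``since both $s$ and $r$ are integers, $-s(r+1)\geqslant k+1$''), forcing the paper to treat type-I graphs separately by observing the hypothesis only holds for $v\in\{5,9\}$. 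You instead handle every $y\in\{3,\ldots,\lambda+2\}$ directly by locating the vertex of the upward parabola $x\mapsto C_\Gamma(x,y)$ in $[-\tfrac12,\tfrac12]$, so the integer minimum is $C_\Gamma(0,y)=y(y-1)(\lambda-y+2)\geqslant 0$; the vertex condition $y(k-y+2)\leqslant v$ reduces by concavity to the endpoint inequality $(\lambda+2)(k-\lambda)\leqslant v$, which your identity $\mu\bigl(v-(\lambda+2)(k-\lambda)\bigr)=(r+1)(s+1)(\mu+r)(\mu+s)$ (I have verified it; it follows from Proposition~\ref{pro:SRG}) shows is \emph{exactly equivalent}, for $s<-1$ and $\Gamma$ connected, to $\mu\leqslant -s$, which in turn is exactly equivalent to the hypothesis $\lambda+1\leqslant -k/s$. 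This buys uniformity: your computation never uses integrality of the eigenvalues, so type I (e.g.\ the Paley graph on $9$ vertices) is absorbed into the main case rather than checked by hand, and it makes transparent that the hypothesis is tight at the endpoint $y=\lambda+2$, explaining the equality cases. What the paper's route buys is reusability: Lemma~\ref{lem:NoOtherZeros} is stated and proved for general edge-regular graphs, whereas your vertex localisation is tailored to the strongly regular identities.

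One small lacuna: your case split is ``$s\leqslant -2$'' versus ``$s=-1$'', which implicitly assumes $s$ is an integer; a priori $-2<s<-1$ is possible for non-integral $s$. You should add the one-line observation that non-integral $s$ occurs only for type-I graphs, where $s=-(\sqrt{v}+1)/2$, so $s>-2$ forces $v=5$, and the conference graph on $5$ vertices has $\lambda=0$ — a case you have already dismissed as vacuous. Similarly, your step ``$(\mu+r)(\mu+s)\leqslant 0$, i.e.\ $\mu\leqslant -s$'' uses $\mu+r>0$, which holds because $s\leqslant -2$ forces $\Gamma$ connected (disconnected strongly regular graphs have $s=-1$), hence $\mu\geqslant 1$; this is worth stating, since the same connectedness is needed to divide by $\mu$ in your identity. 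With these two remarks inserted, your argument is complete.
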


As the second author observed in \cite{Soi:CAB15}, for an edge-regular graph with parameters $(v,k,\lambda)$ we have $C_\Gamma(0, y) = -y(y - 1)(y-(\lambda + 2))$, so for all $y > \lambda+2$, we have $C_\Gamma(0,y)<0$.
Hence the clique adjacency bound is always at most the trivial bound of $\lambda+2$.
Therefore, to prove Theorem~\ref{thm:limitation}, it suffices to show that, for strongly regular graph parameters satisfying $\lambda + 1 \leqslant -k/s$, the clique adjacency bound is at least $\lambda+2$.
%

\begin{lemma}\label{lem:lam1le}
	Let $\Gamma$ be a connected type-II strongly regular graph with parameters $(v,k,\lambda,\mu)$ and eigenvalues $k > r > s$.
	Suppose that $\lambda + 1 \leqslant -k/s$.
	Then $C_\Gamma(1, \lambda+2) \geqslant 0$ with equality if and only if $\lambda = -k/s - 1$.
\end{lemma}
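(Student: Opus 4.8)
The plan is to evaluate $C_\Gamma(1,\lambda+2)$ directly from the definition of the clique adjacency polynomial and then read off its sign. Substituting $x=1$ and $y=\lambda+2$, the third term $y(y-1)(\lambda-y+2)$ vanishes because $\lambda-y+2=0$, and the first two terms give $C_\Gamma(1,\lambda+2)=2(v-\lambda-2)-2(\lambda+2)(k-\lambda-1)$. First I would rewrite this in terms of the eigenvalues and $\mu$ using Proposition~\ref{pro:SRG}. Those identities yield $k-\lambda-1=-(r+1)(s+1)$ and $\lambda+2=\mu+r+s+2$, and, combining $(v-k-1)\mu=k(k-\lambda-1)$ with $\mu=k+rs$, one gets $v-\lambda-2=-(r+1)(s+1)(2k+rs)/\mu$. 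After clearing denominators and factoring (a routine symbolic computation of the type collected in Appendix~\ref{sec:comp}), these substitutions should collapse to the clean form
\[
	C_\Gamma(1,\lambda+2)=\frac{2(r+1)(\mu+r)(s+1)(\mu+s)}{\mu}.
\]

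Next I would translate the hypothesis $\lambda+1\leqslant-k/s$ into a statement about this expression. Writing $k=\mu-rs$ and $\lambda=\mu+r+s$ (both from Proposition~\ref{pro:SRG}) and multiplying through by $s<0$ (which reverses the inequality), the hypothesis $\lambda+1\leqslant-k/s$ becomes equivalent to $(s+1)(\mu+s)\geqslant 0$, with equality in the one inequality precisely when there is equality in the other. Thus the hypothesis controls exactly the single factor of the product above whose sign is not already forced by the parameter constraints.

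Finally, a sign analysis finishes the proof. Since $\Gamma$ is connected we have $\mu\geqslant 1$, and $r\geqslant 0$ gives $r+1>0$ and $\mu+r>0$, so the prefactor $2(r+1)(\mu+r)/\mu$ is positive; hence $C_\Gamma(1,\lambda+2)$ has the same sign as $(s+1)(\mu+s)$, which is nonnegative by the equivalence just established. For the equality case, a connected non-complete strongly regular graph has $s\leqslant -2$, so $s+1\neq 0$; therefore $C_\Gamma(1,\lambda+2)=0$ forces $\mu+s=0$, and running the equivalence backwards shows that $\mu+s=0$ is exactly the condition $\lambda=-k/s-1$. I expect the main obstacle to be the algebraic reduction to the factored form together with the verification that the hypothesis is equivalent to $(s+1)(\mu+s)\geqslant 0$; both steps are elementary but require the identities of Proposition~\ref{pro:SRG} to be applied with care, and are best relegated to the appendix.
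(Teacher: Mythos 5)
Your proposal is correct — I verified the key factorization: with $\lambda=\mu+r+s$, $k=\mu-rs$ and $(v-k-1)\mu=k(k-\lambda-1)$ one indeed gets $k-\lambda-1=-(r+1)(s+1)$, $v-\lambda-2=-(r+1)(s+1)(2\mu-rs)/\mu$, and hence $C_\Gamma(1,\lambda+2)=2(r+1)(s+1)(\mu+r)(\mu+s)/\mu$; likewise multiplying $\lambda+1\leqslant -k/s$ through by $s<0$ gives exactly $(s+1)(\mu+s)\geqslant 0$ — but your route is genuinely different from the paper's. The paper splits into two cases: when $\lambda+1=-k/s$ it observes $-\mu/s=1$ and invokes the identity \eqref{eqn:SRGPol} at $t=0$ to get $C_\Gamma(1,\lambda+2)=C_\Gamma(-\mu/s,1-k/s)=0$; when $\lambda+1<-k/s$ it uses the identity $\tfrac{\mu}{2}C_\Gamma(1,\lambda+2)=k(k-(\mu+1)(\lambda+1))+\mu(\lambda+1)^2$ and then an integrality argument (from $r+1>-k/s$ and $r,s\in\Z$ it deduces $-s(r+1)\geqslant k+1$, hence $1+(\mu+1)/s\geqslant 0$) — this step uses the type-II hypothesis essentially. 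Your single closed-form factorization handles both the inequality and the equality characterization in one stroke, since the hypothesis is precisely the sign condition on the factor $(s+1)(\mu+s)$ and the other factors are visibly positive; notably, your argument never uses integrality of $r$ and $s$, so it proves the lemma for \emph{all} connected strongly regular graphs, not just type II (a mild strengthening, since by the paper's remark in the proof of Theorem~\ref{thm:limitation} the hypothesis holds for type-I graphs only when $v\in\{5,9\}$). One small imprecision: your claim that a connected non-complete strongly regular graph has $s\leqslant -2$ is false in general (the pentagon has $s=(-1-\sqrt{5})/2$); what is true, and all you need for the equality case, is $s<-1$ (so $s+1\neq 0$), with $s\leqslant -2$ following under the lemma's type-II assumption by integrality.
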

\begin{proof}
	First suppose $\lambda + 1 = -k/s$.
	Equivalently, since $\lambda + 1 = k + (r+1)(s+1)$ and $\mu = k + rs$, we have $-\mu/s = 1$.
	In this case, $C_\Gamma(1, \lambda+2) = C_\Gamma(-\mu/s, 1-k/s)$, which is zero by Equation~\eqref{eqn:SRGPol}.
	
	It remains to assume $\lambda+1 < -k/s$.
	Using Proposition~\ref{pro:SRG} (see Appendix~\ref{sec:comp}) we can write
	\[
		\frac{\mu}{2} C_\Gamma(1,\lambda+2) = k(k-(\mu+1)(\lambda+1))+\mu(\lambda+1)^2.
	\]
	To show this quantity is nonnegative, it suffices to show that $k-(\mu+1)(\lambda+1)$ is nonnegative.
	Using the inequality $\lambda+1 < -k/s$, we have $k-(\mu+1)(\lambda+1) > k(1+(\mu+1)/s)$.
	It therefore suffices to show that $1+(\mu+1)/s \geqslant 0$.
	
	Since $\lambda = k + r + s + rs$, the inequality $\lambda+1 < -k/s$ becomes 
	\[
		k + (r+1)(s+1) < -k/s.
	\]
	Since $s < -1$, it follows that $r+1 > -k/s$.
	Multiplying this inequality by $-s$ gives $-s(r+1) > k$.
	Since both $s$ and $r$ are integers, we have $-s(r+1) \geqslant k + 1$
	Now by rearranging and substituting $\mu = k+rs$, we obtain the inequality $1+(\mu+1)/s \geqslant 0$ as required.
\end{proof}
%
%

\begin{lemma}\label{lem:NoOtherZeros}
	Let $\Gamma$ be an edge-regular graph with parameters $(v,k,\lambda)$ 
	such that $C_\Gamma(b,\lambda + 2)\geqslant 0$ for all integers $b$. 
	Then $C_\Gamma(b,c)\geqslant 0$ for all $c\in \{2,\ldots,\lambda+2\}$ and all integers $b$.
\end{lemma}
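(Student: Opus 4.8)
The plan is to reduce the statement to a purely arithmetic realizability question, using the combinatorial meaning of the clique adjacency polynomial. For nonnegative integers $d_1,\dots,d_N$ one has the elementary identity
\[
\sum_{i=1}^N (d_i-x)(d_i-x-1)=N x^2-\Big(2\textstyle\sum_i d_i-N\Big)x+\sum_i d_i(d_i-1),
\]
and comparing coefficients with $C_\Gamma(x,c)$ shows that $C_\Gamma(x,c)=\sum_{i=1}^N(d_i-x)(d_i-x-1)$ exactly when $N=v-c$, $\sum_i d_i=c(k-c+1)$, and $\sum_i d_i(d_i-1)=c(c-1)(\lambda-c+2)$. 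Since each factor $(d_i-x)(d_i-x-1)$ is a product of two consecutive integers whenever $x\in\Z$, the mere existence of such a multiset of nonnegative integers forces $C_\Gamma(b,c)\ge 0$ for every integer $b$. Thus it suffices to prove that, for each $c\in\{2,\dots,\lambda+1\}$, the data $(N_c,S_c,P_c):=\bigl(v-c,\ c(k-c+1),\ c(c-1)(\lambda-c+2)\bigr)$ is realizable by $N_c$ nonnegative integers with prescribed sum $S_c$ and prescribed value $P_c$ of $\sum d_i(d_i-1)$.

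First I would translate the hypothesis into this language. At $c=\lambda+2$ the quantity $\lambda-c+2$ vanishes, so $P_{\lambda+2}=0$ and a realization forces every $d_i\in\{0,1\}$, which is possible precisely when $S_{\lambda+2}\le N_{\lambda+2}$; evaluating $C_\Gamma(\,\cdot\,,\lambda+2)$ at $b=1$ (one of its roots is $0$) confirms that the hypothesis is equivalent to this same inequality, namely $(\lambda+2)(k-\lambda-1)\le v-\lambda-2$, which I take as the working form of the assumption. I would also record the edge-regular inequalities $0\le\lambda\le k-1\le v-2$, so that $N_c=v-c\ge 1$ and $P_c\ge 0$ for all $c$ in range, and note that the necessary parity condition $P_c\equiv 0\pmod 2$ is automatic because $c(c-1)$ is even.

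Given the parity, a realization of $(N_c,S_c,P_c)$ exists as soon as $P_c$ lies between the minimum of $\sum d_i(d_i-1)$, attained at the balanced distribution $d_i\in\{q,q+1\}$ with $q=\lfloor S_c/N_c\rfloor$, and its maximum, attained by concentrating the whole sum in one pile; I would verify both endpoints by explicit construction rather than invoke a general criterion. The upper bound $P_c\le S_c(S_c-1)$ is immediate from $\lambda-c+2\le k-c+1$ and $c\le k$. In the sparse range $S_c\le N_c$ the balanced minimum is $0$, so $P_c\ge 0$ settles the lower bound, and a realization is obtained by starting from $S_c$ ones and $N_c-S_c$ zeros and then merging ones into taller piles, each merge raising $\sum d_i(d_i-1)$ in even steps until $P_c$ is reached.

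The main obstacle is the dense range $S_c>N_c$, where the balanced minimum $P_{\min}(N_c,S_c)$ is positive and the required inequality $P_c\ge P_{\min}(N_c,S_c)$ must be squeezed out of the working hypothesis $(\lambda+2)(k-\lambda-1)\le v-\lambda-2$. I would bound $P_{\min}$ from above by a smooth expression—using convexity of $t\mapsto t(t-1)$ to get $P_{\min}\le S_c^2/N_c$, or the sharper $P_{\min}\le\lfloor S_c/N_c\rfloor\,S_c$—and then reduce to a polynomial inequality in $v,k,\lambda,c$; this verification is the step I expect to be genuinely computational. The truly delicate sub-case is the near-complete regime where $N_c=v-c$ is small: there $P_{\min}$ and $P_{\max}$ nearly coincide, the smooth bounds are too lossy, and the multiset is almost forced, so one must argue exactly. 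In the extreme $N_c=1$ the parameters degenerate to the complete graph with $c=k=\lambda+1=v-1$, where $P_c=c(c-1)=S_c(S_c-1)$ holds identically and the single forced pile realizes the data; I would dispatch this boundary directly and treat the remaining small-$N_c$ cases by the two-height construction $d_i\in\{q,q+1\}$, perturbed to hit $P_c$ exactly.
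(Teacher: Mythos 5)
Your setup is sound as far as it goes: the coefficient matching $C_\Gamma(x,c)=N_cx^2-(2S_c-N_c)x+P_c$, the evenness of $P_c$, and the translation of the hypothesis into $(\lambda+2)(k-\lambda-1)\leqslant v-\lambda-2$ are all correct. But the proposal has a genuine hole exactly where the content of the lemma lies: in the dense range $S_c>N_c$ you never establish $P_c\geqslant P_{\min}(N_c,S_c)$ --- you defer it to a ``genuinely computational'' verification that is not carried out --- and the near-complete small-$N_c$ cases are left at ``one must argue exactly'' with no argument given beyond $N_c=1$. These cases are not routine. For $\Gamma=K_{k+1}$ (edge-regular, and satisfying the hypothesis since $k-\lambda-1=0$) one has $S_c=cN_c$ and $P_c=c(c-1)N_c=P_{\min}(N_c,S_c)$ for every $c\in\{2,\ldots,\lambda+1\}$, so the needed inequality is an exact equality; consequently both smooth bounds you propose are doomed there, since $S_c^2/N_c$ and $\lfloor S_c/N_c\rfloor S_c$ each exceed the true minimum by exactly $S_c$ in this case. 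What you have is a plan whose hard step is missing, not a proof.

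There is also a soundness error in the framework itself. A realization does \emph{not} exist ``as soon as $P_c$ lies between the minimum and the maximum'' with the right parity: for $N=2$, $S=4$ the achievable values of $\sum_i d_i(d_i-1)$ are $\{4,6,12\}$, so $P=8,10$ are even and in range but unrealizable; for $N=4$, $S=4$ the achievable set is $\{0,2,4,6,12\}$, which also refutes the sparse-range merging construction (absorbing a $1$ into a pile of height $h$ raises $\sum_i d_i(d_i-1)$ by $2h$, and merging piles of heights $a,b$ raises it by $2ab$, so you cannot step through all even values). Since realizability is sufficient but not necessary for $C_\Gamma(b,c)\geqslant 0$ on $\Z$, you have replaced the lemma by a strictly stronger claim that is false near the top of the range. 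In fact the upper end is irrelevant: because $\sum_i(d_i-b)(d_i-b-1)\geqslant 0$ for all integers $b$, with equality for the balanced distribution at $b=\lfloor S/N\rfloor$, one has $P_{\min}(N,S)=\max_{b\in\Z}\bigl((2S-N)b-Nb^2\bigr)$, so for $v>c$ the condition ``$C_\Gamma(b,c)\geqslant 0$ for all $b\in\Z$'' is \emph{equivalent} to $P_c\geqslant P_{\min}(N_c,S_c)$ --- but that is a reformulation of the lemma, not a proof of it. Compare the paper's argument, which needs no dense/sparse analysis at all: for $b\leqslant 0$ each of the three terms of $C_\Gamma(b,c)$ is nonnegative, and for $b\geqslant 1$ the identity $C_\Gamma(b,c)-C_\Gamma(b,\lambda+2)=(\lambda+2-c)(b-c)(b-c+1)+2b(\lambda+2-c)(k-\lambda-1)$ is nonnegative because $k-\lambda-1\geqslant 0$, whence $C_\Gamma(b,c)\geqslant C_\Gamma(b,\lambda+2)\geqslant 0$.
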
 

\begin{proof}
	Let $c\in \{2,\ldots,\lambda+2\}$ and let $b$ be an integer. 
	If $b\leqslant 0$, then from the definition of the clique adjacency polynomial $C_\Gamma(x,y)$,
	we see that $C_\Gamma(b,c)\geqslant 0$, so we now assume that $b$ is positive.

	A calculation (see Appendix~\ref{sec:comp}) shows that
	\[ 
		C_\Gamma(b,c)-C_\Gamma(b,\lambda+2) = (\lambda+2-c)(b-c)(b-c+1) + 2b(\lambda+2-c)(k-\lambda-1).
	\]
	This quantity is nonnegative since $b$ and $\lambda+2-c$ are nonnegative integers,
	the product of two consecutive integers is nonnegative, and $k-\lambda-1$ is also nonnegative by Proposition~\ref{pro:SRGbound}.
	Hence 
	\[ 
		C_\Gamma(b,c)\geqslant C_\Gamma(b,\lambda+2)\geqslant 0,
	\]
	as required.
\end{proof}

Now we prove Theorem~\ref{thm:limitation}.

\begin{proof}[Proof of Theorem~\ref{thm:limitation}]
	Firstly, if $\Gamma$ is disconnected then $\Gamma$ is the disjoint union of complete graphs and hence contains cliques of size $\lambda + 2$.
	Therefore the clique adjacency bound is at least $\lambda + 2$.
	Now we assume that $\Gamma$ is connected.
	
	By Lemma~\ref{lem:NoOtherZeros}, the clique adjacency bound is less than $\lambda+2$ only if there exists some integer $b$ such that $C_\Gamma(b,\lambda + 2)$ is less than zero.
	To ease notation set $f(x) := C_\Gamma(x,\lambda + 2)$.
	Hence 
	\[
		f(x) = x \left ( (v-\lambda-2)x + v + (2\lambda-2k+1)(\lambda+2) \right ). 
	\]
	It suffices to show that there does not exist any integer $b$ such that $f(b) < 0$.
	
	Observe that the polynomial $f(x)$ is a quadratic polynomial in the variable $x$.
	Furthermore, the leading coefficient of $f(x)$ is $v-\lambda-2 \geqslant 0$, and $f(0) = 0$.
	Let $\xi$ be the other zero of $f(x)$.
	Now, $f(x)$ is negative if and only if $x$ is between $0$ and $\xi$.
	Hence, if $f(-1)$ and $f(1)$ are both nonnegative then there are no integers $b$ such that $f(b) < 0$.
	As in the proof of the previous result $f(-1)$ is nonnegative.
	Therefore Lemma~\ref{lem:lam1le} completes the proof for type-II strongly regular graphs.
	
	The inequality $\lambda+1 \leqslant -k/s$ only holds for type-I strongly regular graphs on $5$ vertices or $9$ vertices (where we have equality).
	One can explicitly compute the clique adjacency bound for these two cases: the unique $(5,2,0,1)$-strongly regular graph and the unique $(9,4,1,2)$-strongly regular graph.
	For each of these graphs the clique adjacency bound is equal to $\lambda+2$.
\end{proof}

Now we give a couple of remarks about Theorem~\ref{thm:limitation}.

\begin{remark}
	For strongly regular graphs with $\lambda \leqslant 1$, it is easy to see that the clique number is $\lambda + 2$.
	By Theorem~\ref{thm:limitation}, the clique adjacency bound is equal to the clique number for such graphs.
	Let $\Gamma$ be a strongly regular graph with parameters $(v,k,\lambda,\mu)$.
	By Proposition~\ref{pro:SRG}, we see that $k = -s(r+1)- r +\lambda$.
	Therefore, for strongly regular graphs with $\lambda=2$ and $r \geqslant 2$, we have $\lambda+1 = 3 \leqslant -k/s$, and so Theorem~\ref{thm:limitation} applies to such graphs.
\end{remark}

\begin{remark}
	We conjecture that if the clique adjacency bound is less than $-k/s$ then $\lambda+1 \leqslant -k/s$.
	We have verified this conjecture for all feasible parameter tuples for strongly regular graphs on up to $1300$ vertices, making use of Brouwer's website~\cite{Brouwer:URL}.
\end{remark}

In Table~\ref{tab:bd}, we list all the feasible parameter tuples for strongly regular graphs on at most $150$ vertices to which we can apply either Theorem~\ref{thm:beatDelsarteConf} or Theorem~\ref{thm:beatDelsarteNonConf}.
In other words, Table~\ref{tab:bd} displays the feasible parameters for strongly regular graphs on at most $150$ vertices for which the clique adjacency bound is strictly less than the Delsarte bound.
In the column labelled `Exists', if there exists a strongly regular graph with the appropriate parameters then we put `+', or `!' if the graph is known to be unique; otherwise, if the existence is unknown, we put `?'.  
In the final column of Table~\ref{tab:bd}, we put `Y' (resp. `N') if there exists (resp. does not exist) a strongly regular graph with the corresponding parameters that has clique number equal to the clique adjacency bound, otherwise we put a `?' if such existence is unknown.
We refer to Brouwer's website~\cite{Brouwer:URL} for details on the existence of strongly regular graphs with given parameters.

For the parameter tuples in Table~\ref{tab:bd}, the Delsarte bound is equal to the clique adjacency bound plus $1$.
As an example of a parameter tuple for which the clique adjacency bound differs from the Delsarte bound by $2$, we have $(378,52,1,8)$ for which there exists a corresponding graph~\cite{Penttila05}.
For this graph the Delsarte bound is $5$, but the clique adjacency bound is $3$.

	\begin{table}[htbp]
		\begin{center}
			\begin{tabular}{l | c | c | c | c  }
				Parameters & Type & CAB & Exists & Sharp  \\
				\hline
				$(17, 8, 3, 4)$      &  I   & $3$  & !    	& Y    \\
			  	$(37, 18, 8, 9)$     &  I   & $5$  & +  	& Y 	\\
				$(50, 7, 0, 1)$      &  II  & $2$  & ! 		& Y    \\ 
				$(56, 10, 0, 2)$     &  II  & $2$  & ! 		& Y 	\\	  		
				$(65, 32, 15, 16)$   &  I   & $7$  & ?    	& ?    \\
				$(77, 16, 0, 4)$     &  II  & $2$  & !   	& Y    \\
				$(88, 27, 6, 9)$     &  II  & $4$  & ?    	& ?    \\
				$(99, 14, 1, 2)$     &  II  & $3$  & ?    	& Y    \\
				$(100, 22, 0, 6)$    &  II  & $2$  & !    	& Y    \\
				$(101, 50, 24, 25)$  &  I   & $9$  & +    	& ?    \\
				$(105, 32, 4, 12)$   &  II  & $3$  & !    	& Y    \\
				$(111, 30, 5, 9)$    &  II  & $4$  & ?    	& ? 	\\
				$(115, 18, 1, 3)$    &  II  & $3$  & ?    	& Y 	\\
				$(120, 42, 8, 18)$   &  II  & $3$  & !    	& Y 	\\
				$(121, 36, 7, 12)$   &  II  & $4$  & ?    	& ? 	\\
				$(133, 32, 6, 8)$    &  II  & $5$  & ?    	& ? 	\\
				$(144, 39, 6, 12)$   &  II  & $4$  &  +    	& Y 	\\
				$(144, 52, 16, 20)$  &  II   & $6$  &  ?    & ? 	\\
				$(145, 72, 35, 36)$  &  I   & $11$  &  ?    & ? 	\\
				$(149, 74, 36, 37)$  &  I   & $11$  &  +    & ? 
			\end{tabular}
		 \end{center}
	 	\caption{Feasible parameter tuples for strongly regular graphs on at most $150$ vertices to which we can apply either Theorem~\ref{thm:beatDelsarteConf} or Theorem~\ref{thm:beatDelsarteNonConf}.}
	 	\label{tab:bd}
	\end{table}
	
	Feasible parameters for which there does not exist a corresponding strongly regular graph whose clique number is equal to the clique adjacency bound include $(16,10,6,6)$ and $(27, 16, 10, 8)$.
	However, we ask the following question.
	Do there exist strongly regular graphs with parameters $(v,k,\lambda,\mu)$, with $k < v/2$, such that every strongly regular graph having those parameters has clique number less than the clique adjacency bound?
	
	\section{Hoffman bound vs Delsarte bound vs clique adjacency bound}
\label{sec:HoffmanDelsarteCAB}
	Let $\Gamma$ be a connected non-complete regular graph with $v$ vertices, valency $k$, and second
	largest eigenvalue $r<k$. Then the complement $\overline\Gamma$ of $\Gamma$ is a regular graph with valency 
	$\overline k=v-k-1$ and least eigenvalue $\overline s=-r-1<0$. We may obtain a bound for the clique number
	of $\Gamma$ by applying the 
	Hoffman bound (also called the ratio bound) \cite[Theorem~2.4.1]{GM16} on the
	size of a largest independent set (coclique) of $\overline\Gamma$. This gives
	\[ 
	\omega(\Gamma)\leqslant \left \lfloor \frac{v}{1-\overline{k}/\overline{s}} \right \rfloor.
	\]
	If $\Gamma$ is strongly regular, then it is known
	(and follows from the relations of Proposition~\ref{pro:SRG})
	that the Delsarte bound for $\omega(\Gamma)$ is the same as  
	that given by the Hoffman bound above.

	Now the Delsarte bound applies not only to strongly regular
	graphs, but also to the graphs $\{\Gamma_1,\ldots,\Gamma_d\}$ 
	of the relations (other than equality) of any   
	$d$-class symmetric association scheme (see \cite[Corollary~3.7.2]{GM16}). Thus,
	if $\Gamma$ is such a graph, having valency $k$ and 
	least  eigenvalue $s$, then $\omega(\Gamma)\leqslant \lfloor 1-k/s\rfloor$.

	Here is an interesting illustrative example.  Let $\Delta$ be the edge graph (or line
	graph) of the incidence graph of the projective plane of order $2$.  Then $\Delta$
	is the unique distance-regular graph with intersection array 
	$\{4,2,2;1,1,2\}$. Now let $\Delta_3$ be
	the graph on the vertices of $\Delta$, with two vertices joined 
	by an edge if and only if they have distance~$3$ in $\Delta$. 
	Then $\Delta_3$ is the graph of a relation in the usual 
	symmetric association scheme associated
	with a distance-regular graph, where two vertices are in relation~$i$
	precisely when they are at distance~$i$ in the distance-regular graph.   
	The graph $\Delta_3$ has diameter~$2$ and is edge-regular (but not strongly
	regular) with parameters $(v,k,\lambda)=(21,8,3)$.
	The clique adjacency bound for $\Delta_3$ is $4$.  
	The least eigenvalue of $\Delta_3$ is $-\sqrt{8}$,
	and the Delsarte bound gives $3$, and indeed, 
	$\omega(\Delta_3)=3$.  However, the complement of $\Delta_3$ has least
	eigenvalue $-1-\sqrt{8}$, and the Hoffman bound for independent sets in the
	complement of $\Delta_3$ gives $5$. Thus, for $\Delta_3$, the Delsarte bound is better
	than the clique adjacency bound which is better than that obtained from the Hoffman bound.
	However, the three bounds are for different classes of graphs.
	For example, there may well be an edge-regular graph with parameters
	$(21,8,3)$ and clique number $4$. It would be interesting to find one.

	We conjecture that if $\Gamma$ is any connected non-complete edge-regular graph, 
	then the clique adjacency bound for $\omega(\Gamma)$ is at most that 
	obtained from the Hoffman bound for $\overline \Gamma$.   
\appendix
	\section{Algebraic computational verification of identities}
\label{sec:comp}
	In this appendix we present the algebraic computations
	in Maple~\cite{maple} that were used to verify
	certain identities employed in this paper. These identities
	were also checked independently using Magma~\cite{Magma}.    

	We start up Maple (version 18) and assign to $C$ the clique adjacency polynomial.
	\begin{verbatim}
	> C:=x*(x+1)*(v-y)-2*x*y*(k-y+1)+y*(y-1)*(lambda-y+2):
	\end{verbatim}
	We then make a set of relators, obtained from Proposition~\ref{pro:SRG},
	which evaluate to $0$ on the parameters $(v,k,\lambda,\mu)$ and eigenvalues 
	$r,s$ (with $k>r \geqslant s$) of a strongly regular graph. 
	\begin{verbatim}
	> srg_rels:={(v-k-1)*mu-k*(k-lambda-1),(lambda-mu)-(r+s),(mu-k)-r*s}:
	\end{verbatim}
	We make a further set of relators 
	which evaluate to $0$ on the parameters and eigenvalues 
	of a type-I strongly regular graph. 
	\begin{verbatim}
	> type1_rels:=srg_rels union {2*k-(v-1),4*lambda-(v-5),4*mu-(v-1)}:
	\end{verbatim}

	Let $R=\mathbb{Q}[t,v,k,\lambda,\mu,r,s]$ be the ring 
	of polynomials over $\mathbb{Q}$ in the indeterminates
	$t,v,k,\lambda,\mu,r,s$, let $S$ be the ideal of $R$
	generated by \texttt{srg\_rels} given above, and let 
	$T$ be the ideal of $R$ generated by \texttt{type1\_rels}. 
	We use the Maple package \texttt{Groebner}
	to caclulate and employ Gr\"{o}bner bases \cite{Co99} 
	to work in the factor rings $R/S$ and $R/T$.

	We set the monomial ordering for the 
	Gr\"{o}bner basis calculations to be the
	Maple \texttt{tdeg} ordering, more commonly called
	the grevlex ordering, with the indeterminates ordered as
	 $t>v>k>\lambda>\mu>r>s$.
	\begin{verbatim} 
	> ordering:=tdeg(t,v,k,lambda,mu,r,s):
	\end{verbatim}
	 Then we compute a Gr\"{o}bner basis $G$ for $S$.
	\begin{verbatim}
	> G:=Groebner[Basis](srg_rels,ordering):
	\end{verbatim}
	For the record, $G=
	[\lambda-\mu-r-s,rs+k-\mu,{k}^{2}-kr-ks-\mu\,v-k+\mu]$. 
	Similarly, we compute a Gr\"{o}bner basis $H$ for $T$.
	\begin{verbatim} 
	> H:=Groebner[Basis](type1_rels,ordering):
	\end{verbatim}
	Here, we obtain $H=
	[r+1+s,\lambda+1-\mu,k-2\,\mu,v-1-4\,\mu,{s}^{2}-\mu+s]$. 

	We now verify that the identity of Lemma~\ref{lem:CAP0} holds, by checking that
	$s^3(C(-\mu/s,2-k/s)-(2s-r)(r+1))=0$ in $R/S$. 
	\begin{verbatim}
	> Groebner[NormalForm](expand(s^3*(eval(C,[x=-mu/s,y=2-k/s])
	>    - (2*s-r)*(r+1))),G,ordering);
	                                       0
	\end{verbatim}
	Similarly, we verify that the identities of Lemma~\ref{lem:CAP2.0} hold for type-I
	strongly regular graphs, by working in $R/T$.
	\begin{verbatim}
	> Groebner[NormalForm](eval(C,[x=r-t,y=3+2*r-2*t])
	>    - 2*(t-1)*(t+s-2)*(t+2*s),H,ordering);
	                                       0
	> Groebner[NormalForm](eval(C,[x=r-t,y=2+2*r-2*t])
	>    - (t+s)*(2*t^2+(4*s-1)*t-3*s-1),H,ordering);
	                                       0
	\end{verbatim}
	Next are the verifications of the identities of Lemma~\ref{lem:CAP1}.
	\begin{verbatim}
	> Groebner[NormalForm](expand(s^3*(eval(C,[x=-mu/s-t,y=2-k/s-t])
	>    - (t-1)*((v-2*k+lambda)*t-(2*s-r)*(r+1)))),G,ordering); 
	                                       0
	> Groebner[NormalForm](expand(s^3*(eval(C,[x=-mu/s-t,y=1-k/s-t])
	>    - t*((v-2*k+lambda)*(t-1)+r*(r+1)))),G,ordering); 
	                                       0
	\end{verbatim}
	Here is confirmation of the identity used in Remark~\ref{rem:4.10}.
	\begin{verbatim}
	> Groebner[NormalForm](mu*(v-2*k+lambda)-(r^2+r)*(s^2+s),G,ordering); 
	                                       0
	\end{verbatim}
	Next is verification of the identity used in the proof of Lemma~\ref{lem:lam1le}.
	\begin{verbatim}
	Groebner[NormalForm](mu*eval(C,[x=1,y=lambda+2])/2 
	  - (k*(k-(mu+1)*(lambda+1))+mu*(lambda+1)^2),G,ordering); 
	                                      0
	\end{verbatim}
	Finally, here is a confirmation of the polynomial equality used in the proof
	of Lemma~\ref{lem:NoOtherZeros}.
	\begin{verbatim}
	> expand((eval(C,[x=b,y=c])-eval(C,[x=b,y=lambda+2]))
	>    - ((lambda+2-c)*(b-c)*(b-c+1)+2*b*(lambda+2-c)*(k-lambda-1)));
	                                       0
	\end{verbatim}

	We remark that the total CPU time for all these computations on
	a desktop Linux PC was only about 0.16 seconds, and the total
	store used by Maple was 2.4MB.
	
	\section*{Acknowledgement} 
	
	We thank Anton Betten for organising the Combinatorics and Computer Algebra 2015 conference, whose problem sessions brought us together to begin this work.

\bibliographystyle{myplain}
\bibliography{sbib}

\end{document}